\title[Gap Labelling for Full Shifts]{Gap Labels and Asymptotic Gap Opening \\  for Full Shifts}
\author[D.\ Damanik]{David Damanik}
\address{Department of Mathematics, Rice University, Houston, TX~77005, USA}
\email{damanik@rice.edu}
\author[\'{I}.\ Emilsd\'{o}ttir]{\'{I}ris Emilsd\'{o}ttir}
\address{Department of Mathematics, Rice University, Houston, TX~77005, USA}
\email{irist@rice.edu}
\author[J.\ Fillman]{Jake Fillman}
\address{Department of Mathematics, Texas A\&M University, College Station, TX~77843, USA}
\email{fillman@tamu.edu}
\newcommand{\bbN}{{\mathbb{N}}}
\newcommand{\bbR}{{\mathbb{R}}}
\newcommand{\bbT}{{\mathbb{T}}}
\newcommand{\bbZ}{{\mathbb{Z}}}
\newcommand{\calA}{{\mathcal{A}}}
\newcommand{\calB}{{\mathcal{B}}}
\newcommand{\frS}{{\mathfrak{S}}}
\newcommand{\frA}{{\mathfrak{A}}}
\newcommand{\Hausdorff}{{\mathrm{H}}}
\newcommand{\orb}{{\mathrm{Orb}}}
\newcommand{\range}{{\mathrm{Ran}}}
\newcommand{\QC}{{\mathrm{CR}}}
\newcommand{\clopen}{{\mathrm{clop}}}
\newcommand{\OG}{{\mathrm{OG}}}
\newcommand{\AGO}{{\mathrm{AGO}}}
\newcommand{\diag}{{\mathrm{diag}}}
\newcommand{\jacobi}{{\mathrm{Jacobi}}}
\newcommand{\Schrodinger}{{\mathrm{Schr}}}
\newcommand{\jacobilabels}{{\operatorname{JL}}}
\newcommand{\schrlabels}{{\operatorname{SL}}}
\newcommand{\diaglabels}{{\operatorname{DL}}}
\newcommand{\odsamp}{\mathfrak{p}}
\newcommand{\disamp}{\mathfrak{q}}
\newcommand{\tr}{{\operatorname{Tr}}}
\DeclareMathOperator{\supp}{supp}
\DeclareMathOperator{\Leb}{Leb}
\newcommand{\set}[1]{{\left\{ #1 \right\} } }
\newtheorem{theorem}{Theorem}[section]
\theoremstyle{definition}
\newtheorem{remark}[theorem]{Remark}
\newtheorem{lemma}[theorem]{Lemma}
\newtheorem{coro}[theorem]{Corollary}
\newtheorem{prop}[theorem]{Proposition}
\newtheorem{claim}{Claim}[theorem]
\newtheorem*{claim*}{Claim}
\newtheorem{definition}[theorem]{Definition}
\newenvironment{claimproof}[1][Proof of Claim]{\noindent \underline{#1.} }{\hfill$\diamondsuit$}
\definecolor{maroon}{rgb}{0.35,0,0}
\numberwithin{equation}{section}
\begin{document}

\begin{abstract}
We discuss gap labelling for operators generated by the full shift over a compact subset of the real line. 
The set of Johnson--Schwartzman gap labels is the algebra generated by weights of clopen subsets of the support of the single-site distribution. 
Due to the presence of a dense set of periodic orbits, it is impossible to find a sampling function for which all gaps allowed by the gap labelling theorem open simultaneously.
Nevertheless, for a suitable choice of the single-site distribution, we show that for generic sampling functions, every spectral gap opens in the large-coupling limit. 
Furthermore, we show that for other choices  of weights there are gaps that cannot open for purely diagonal operators.
\end{abstract}

\maketitle

\hypersetup{
	linkcolor={black!30!blue},
	citecolor={red},
	urlcolor={black!30!blue}
}

\section{Introduction}

\subsection{Prologue}
Spurred by recent insights that have enabled the resolution of long-standing open problems related to the spectra of random operators with nontrivial background \cite{AviDamGor2023GAFA} and operators generated by hyperbolic dynamical systems \cite{DF22Doubling}, gap labelling has experienced renewed interest.
This paper is about operator families that are covariant with respect to an ergodic topological dynamical system $(\Omega,T,\mu)$ in which $\Omega$ is a compact metric space, $T$ is a homeomorphism from $\Omega$ to itself, and $\mu$ is a $T$-ergodic Borel probability measure on $\Omega$. 
The topological structure of their spectra can be quite rich: such spectra can be intervals, finite unions of intervals, thin or fat Cantor sets, and so on. 
There is naturally an interest in connecting the topology and dynamics of the base system with the spectra of operators. 
The \emph{gap labelling theorem} provides one such connection.
More specifically, since each operator in question is bounded and self-adjoint, its spectrum is a closed and bounded subset of $\bbR$, so the complement of the spectrum is a union of at most countably many open intervals.
A bounded component of the complement of the spectrum is called a \emph{spectral gap} (or just a \emph{gap}).
Then, in one formulation, the value assumed by the integrated density of states of the operator family at a given energy in a gap lies in the range of the Schwartzman homomorphism corresponding to the suspension of the underlying dynamical system \cite{Johnson1986JDE}.

It is thus completely natural to compute the set of possible labels permitted by the gap labelling theorem, and, having done so, it is of interest to ask whether the set is minimal. 
In other words, does each possible label appear as the label of at least one genuinely open spectral gap for some operator family generated by the underlying dynamics? 
The answer to this question is known to be affirmative anytime the topological dynamical system is \emph{strictly ergodic} (i.e., minimal and uniquely ergodic) and enjoys a non-periodic finite-dimensional factor \cite{AviBocDam2012JEMS}. 
However, for non-uniquely ergodic dynamical systems, it is unclear whether one should expect such a result to remain true. 
Indeed, a non-uniquely ergodic system may have a dense collection of periodic orbits, which makes it impossible for a \emph{single} operator family to have all possible gaps open whenever the set of possible labels is dense in the interval $(0,1)$.
On the other hand, for some mixing systems with connected phase space, such as the doubling map \cite{DF22Doubling}, hyperbolic toral automorphisms \cite{DFGap}, and certain affine automorphisms of suitable connected topological groups \cite{DEF2023JST}, it is known that the label set is not dense and indeed is as small as possible, namely, it is given by $\bbZ$.
Consequently, in those settings, one can open all possible gaps allowed by the gap labelling theorem for the trivial reason that absolutely no gaps whatsoever are allowed.

The full shift over a compact space enjoys a cornucopia of invariant measures, making it a natural starting point for investigating the framework in the non-uniquely ergodic setting.
In view of this discussion, the current manuscript aims to establish the following results in the Johnson--Schwartzman framework for full shifts over compact alphabets: 
\begin{itemize}
\item Compute the set of \emph{possible} (Johnson--Schwartzman) labels corresponding to shift-ergodic measures whose topological support is the full shift.
In view of Johnson's theorem \cite{Johnson1986JDE}, this is precisely given by intersecting the \emph{Schwartzman group} of the underlying dynamical system, which we denote by $\frS(\Omega,T,\mu)$, with the interval $(0,1)$. 
For a precise definition, see Subsection~\ref{ss:johnsonGL}.\medskip
\item Demonstrate that, for some choices of a full shift and a fully supported shift-ergodic measure, all possible labels appear as labels of genuinely open spectral gaps.\medskip
\item Moreover, in those same cases, for generic sampling functions, all spectral gaps open in the large-coupling limit.\medskip
\item However, for other choices of the (fully supported) ergodic measure, there are possible gap labels that nevertheless cannot appear as the label of an open gap for \emph{any diagonal} operator family over the given base dynamics.\bigskip
\end{itemize}

\subsection{Results}
To set notation and state results, suppose $\mu_0$ is a Borel probability measure with compact topological support $\supp\mu_0 =: \mathcal{A} \subseteq \bbR$. Put 
\[\Omega = \mathcal{A}^\bbZ
:= \{(\omega_n)_{n \in \bbZ} : \omega_n \in \calA \text{ for every } n \in \bbZ\},
\] let  $T:\Omega \to \Omega$ denote the shift \[[T\omega]_n = \omega_{n+1},\] 
and let $\mu = \mu_0^\bbZ$ denote the product measure.
We will refer to $(\Omega,T,\mu)$ as the \emph{full shift} generated by $(\calA,\mu_0)$ (or the \emph{full shift over} $\calA$ when $\mu_0$ is clear from context).

Given a pair of continuous functions $\disamp:\Omega \to \bbR$ and $\odsamp : \Omega \to \bbR_{\geq 0}$, the associated dynamically defined Jacobi matrices $J_\omega =  J_{\disamp,\odsamp,\omega}$ act on $\ell^2(\bbZ)$ by
\begin{equation}\label{eq:jacobidef}
[J_\omega \psi](n) = \odsamp(T^{n-1}\omega)\psi(n-1) + \disamp(T^n\omega) \psi(n)+ \odsamp(T^n\omega)\psi(n+1).
\end{equation}
We will denote the $\mu$-almost sure spectrum of the family $\{J_{\disamp, \odsamp,\omega}\}_{\omega \in \Omega}$ by $\Sigma_{\disamp,\odsamp} = \Sigma_{\disamp,\odsamp}(\Omega,T,\mu)$ and the integrated density of states by $k_{\disamp, \odsamp}$.
The case $\odsamp \equiv 1$ leads to an ergodic family of \emph{Schr\"odinger operators}, which we denote by $H_{\disamp,\omega} := J_{\disamp,1,\omega}$. 
We direct the reader to Section~\ref{sec:background} for a more thorough discussion.

If one chooses the sampling functions to be $\disamp(\omega) = \omega_0$ and $\odsamp \equiv 1$, the corresponding ergodic family is the \emph{Anderson model} with single-site distribution $\mu_0$, and its spectrum is (almost surely) given by the expression \cite{KunzSouillard1980}
\begin{equation}
\Sigma = \calA + [-2,2]
= \{x+ y : x \in \calA \text{ and } -2\le y \le 2\}.
\end{equation}
 From this, one knows that the spectrum consists of compact intervals of length at least four, separated by finitely many open gaps, and one can (given $\calA$) compute these quantities.
 This explicit expression for the spectrum is particular to this choice of sampling function. 
However, $(\Omega,T,\mu)$ is a topological dynamical system to which the gap labelling theorem applies, so it is natural to compute its Schwartzman group $\frS (\Omega,T,\mu)$ and hence the set of allowable labels to see what other spectral gaps may arise for different choices of $\disamp$ and $\odsamp$.

\begin{definition}
The following definitions establish the framework needed to formulate the gap labelling theorem for the full shift.
\begin{enumerate}[label = {\rm (\alph*)}]
\item If $W \subseteq \bbR$, then $\bbZ[W]$ denotes the \emph{algebra} generated by $W$, that is, the smallest algebra containing $W$. \item If $Y$ is a topological space, one says that $C \subseteq Y$ is \emph{clopen} if $C$ is both closed and open. We denote by $\clopen(Y)$ the collection of clopen subsets of $Y$.
\item If $\nu$ is a measure on a set $S$ and $\mathcal{P}$ is a collection of measurable subsets of $S$, then
\[ \nu(\mathcal{P})= \set{\nu(P) : P \in \mathcal{P}}.\]
Likewise, if $\mathcal{F}$ is a collection of integrable functions, we write
\[ \nu(\mathcal{F})= \set{ \int 
\! f \, d\nu : f \in \mathcal{F}}.\]
\end{enumerate}
\end{definition}

With these preliminaries, we can formulate the conclusion of the gap labelling theorem for the case under consideration: the Schwartzman group is precisely the algebra generated by weights of clopen subsets of the support with respect to the single-site distribution:
\begin{theorem}\label{t:rand:GL:main}
Let $\calA \subseteq \bbR$ denote a compact set, $\mu_0$ a Borel probability measure with $\supp\mu_0 = \calA$, and consider the product space $(\Omega,T,\mu)$ where $\Omega=\calA^\bbZ$, $\mu=\mu_0^\bbZ$, and $T$ denotes the shift $[T\omega]_n = \omega_{n+1}$. Then the Schwartzman group of $(\Omega,T,\mu)$ is given by
\begin{equation}
    \frS(\Omega,T,\mu) = \bbZ[\mu_0(\clopen(\calA))].
\end{equation}
\end{theorem}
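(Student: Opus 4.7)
The plan is to prove the two inclusions $\bbZ[\mu_0(\clopen(\calA))] \subseteq \frS(\Omega,T,\mu)$ and $\frS(\Omega,T,\mu) \subseteq \bbZ[\mu_0(\clopen(\calA))]$ separately. For the forward inclusion I would exhibit each generator as a Schwartzman label by an elementary construction. Given any $C \in \clopen(\calA)$, the cylinder $[C]_0 := \set{\omega \in \Omega : \omega_0 \in C}$ is clopen in $\Omega$, so $\mathbf{1}_{[C]_0}$ is a continuous $\bbZ$-valued function; taking $\phi \equiv 0 \in C(\Omega,\bbT)$, one has $\mathbf{1}_{[C]_0} \equiv \phi \circ T - \phi \pmod 1$, whence $\mu_0(C) = \int \mathbf{1}_{[C]_0}\, d\mu \in \frS(\Omega,T,\mu)$. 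Applied to a clopen cylinder $\set{\omega : \omega_{n_i} \in C_i,\ i=1,\dots,k}$, the same argument produces $\prod_i \mu_0(C_i)$ as a Schwartzman label; since $\frS$ is a subgroup of $\bbR$, closing under integer linear combinations yields all of $\bbZ[\mu_0(\clopen(\calA))]$.

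For the reverse inclusion, the crux will be the following lifting lemma: \emph{every continuous $\phi : \Omega \to \bbT$ admits a continuous lift $\tilde\phi : \Omega \to \bbR$.} The strategy is to approximate $\phi$ uniformly by cylinder functions $\phi_N : \Omega \to \bbT$ depending only on $\omega_{-N}, \dots, \omega_N$, obtained as $\phi_N = \psi_N \circ \pi_N$ for continuous $\psi_N : \calA^{[-N,N]} \to \bbT$. Since each connected component of $\calA \subseteq \bbR$ is a point or a closed interval, a shape-theoretic argument (or a direct K\"unneth computation) yields $\check H^1(\calA^{[-N,N]};\bbZ)=0$; hence each $\psi_N$ is null-homotopic and therefore lifts. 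Choosing the lifts $\tilde\phi_N$ compatibly (so that $\tilde\phi_{N+1}-\tilde\phi_N$ is small in sup-norm) produces a Cauchy sequence converging uniformly to a continuous lift $\tilde\phi$ of $\phi$.

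Granted the lemma, for $F \in C(\Omega,\bbR)$ and $\phi \in C(\Omega,\bbT)$ with $F \equiv \phi\circ T - \phi \pmod 1$, fixing a continuous lift $\tilde\phi$ of $\phi$ yields $k := F - (\tilde\phi\circ T - \tilde\phi) \in C(\Omega,\bbZ)$, and $T$-invariance of $\mu$ gives $\int F\, d\mu = \int k\, d\mu$. Compactness forces $k$ to be locally constant with finite range, so $k = \sum_j k_j \mathbf{1}_{E_j}$ with $E_j$ clopen in $\Omega$. A separate observation shows that every clopen $E \subseteq \Omega$ is a finite disjoint union of basic cylinders $\set{\omega : \omega_n \in D_n,\ |n|\leq N}$ with each $D_n \in \clopen(\calA)$; by compactness this reduces to the analogous claim for the finite product $\calA^{[-N,N]}$. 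Thus $\mu(E)$ is a finite sum of products $\prod_n \mu_0(D_n)$ and lies in the algebra, which forces $\int F\, d\mu$ to do so as well.

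The principal technical obstacle I expect is the lifting lemma, especially in cases where $\calA$ has an intricate topological structure (mixing intervals, isolated points, and accumulating components). Verifying that $\calA^{[-N,N]}$ has trivial first \v{C}ech cohomology in this generality, and that the telescoping construction of lifts goes through, will require genuine care. The claim that clopen subsets of the infinite product $\Omega$ are exactly finite unions of basic cylinders with clopen factors is similarly nontrivial and reduces to a Stone-duality type argument at the level of finite products.
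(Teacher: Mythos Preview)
Your proposal is correct and follows essentially the same route as the paper: both reduce $\frS(\Omega,T,\mu)$ to $\mu(C(\Omega,\bbZ))$ via a lifting/nullhomotopy argument for continuous maps $\Omega\to\bbT$, and then identify this set with $\bbZ[\mu_0(\clopen(\calA))]$ by decomposing clopen subsets of $\Omega$ into finite disjoint unions of clopen rectangles. The only differences are cosmetic: you phrase things via the coboundary description of $\frS$ while the paper works directly on the suspension, and the paper dispatches the lifting step in one line (citing that $\calA\subseteq\bbR$ forces any map $\Omega\to\bbT$ to be nullhomotopic) rather than via the \v{C}ech-cohomology/telescoping argument you sketch---your caution there is reasonable, but the fact is standard and no harder than what you outline.
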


\begin{remark}
Let us emphasize that $\calA$ is viewed as a compact topological space in the relative topology that it inherits as a subset of $\bbR$, and thus $\clopen(\calA)$ should be understood as the set of subsets of $\calA$ that are both relatively closed and relatively open.

We also mention that, in the cases when both can be computed, this agrees with the labels that are computed via K-theory; see \cite[Theorem~4.6]{BBG1992RMP}.
\end{remark}

\begin{coro} \label{coro:AconnfrS=Z}
In the setting of Theorem~\ref{t:rand:GL:main}, $\frS(\Omega,T,\mu) = \bbZ$ if $\calA$ is connected. 
\end{coro}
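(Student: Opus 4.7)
The plan is to apply Theorem~\ref{t:rand:GL:main} directly and simply enumerate the clopen subsets. First I would observe that for any topological space $Y$, a subset $C \subseteq Y$ is clopen if and only if $C$ and $Y \setminus C$ form a separation of $Y$ (when both are nonempty). Consequently, if $\calA$ is connected, then $\clopen(\calA) = \{\emptyset, \calA\}$.

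Next I would compute the weights: since $\mu_0$ is a probability measure, $\mu_0(\emptyset) = 0$ and $\mu_0(\calA) = 1$, so
\begin{equation*}
\mu_0(\clopen(\calA)) = \{0, 1\}.
\end{equation*}
Finally, by Theorem~\ref{t:rand:GL:main},
\begin{equation*}
\frS(\Omega,T,\mu) = \bbZ[\mu_0(\clopen(\calA))] = \bbZ[\{0,1\}] = \bbZ,
\end{equation*}
where the last equality holds because the smallest (unital) subring of $\bbR$ containing $0$ and $1$ is $\bbZ$ itself.

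There is no substantive obstacle here; the entire statement is a one-line consequence of Theorem~\ref{t:rand:GL:main} once one notes that connectedness forces the only clopen subsets to be the trivial ones. The only minor point worth being careful about is the convention for $\bbZ[W]$: one should verify that with the paper's convention (smallest algebra containing $W$, which in this context means a unital subring of $\bbR$), the algebra generated by $\{0,1\}$ is indeed $\bbZ$, which is immediate.
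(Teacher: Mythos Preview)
Your proposal is correct and is exactly the intended argument: the paper states the corollary without a written proof because it follows immediately from Theorem~\ref{t:rand:GL:main} by the observation you give, namely that connectedness of $\calA$ forces $\clopen(\calA)=\{\emptyset,\calA\}$ and hence $\bbZ[\mu_0(\clopen(\calA))]=\bbZ[\{0,1\}]=\bbZ$.
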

\begin{remark}
By the gap labelling theorem, the integrated density of states associated with a given ergodic family assumes values in the set $\frS \cap (0,1)$ in spectral gaps \cite{DFGap, DFZ, Johnson1986JDE}.
 In particular, if $\frS = \bbZ$, then $\frS \cap (0,1) = \emptyset$ and there can be no spectral gaps, so the almost-sure spectrum of any ergodic family with base dynamics $(\Omega,T,\mu)$ is an interval whenever Corollary~\ref{coro:AconnfrS=Z} is applicable.
\end{remark}

Once the set of possible labels has been identified, one would like to know whether it is minimal. 
Put differently, having identified a set that is guaranteed to be a superset of all possible labels, it is then of interest to determine whether every element of the putative label set occurs as the label of a genuine open gap. 
To that end, given a possible label 
\begin{equation}
\ell \in \frS_0 = \frS_0(\Omega,T,\mu) := \frS(\Omega,T,\mu) \cap (0,1),
\end{equation}
we say that $\Sigma_{\disamp, \odsamp}$ \emph{has an open gap with label} $\ell$ if there is a bounded component $I$ of $ \bbR \setminus \Sigma_{\disamp,\odsamp}$ such that $k_{\disamp,\odsamp}(E) = \ell$ for $E \in I$, where $k_{\disamp, \odsamp}$ denotes the integrated density of states (see Section~\ref{sec:background}).
We then denote
\begin{align} 
\nonumber
\OG_\jacobi(\ell) 
& = \OG_\jacobi(\Omega,T,\mu,\ell) \\
\label{eq:OGdef}
&= \{(\disamp, \odsamp) \in C(\Omega,\bbR) \times C(\Omega,\bbR_{\geq 0}) :  \substack{\Sigma_{\disamp, \odsamp}\text{ has an open gap} \\ \text{with label } \ell} \}.
\end{align}
For the Schr\"odinger and diagonal operator families, we also may consider
\begin{align}\label{eq:OGschrdef}
\OG_\Schrodinger(\ell)
= \OG_\Schrodinger(\Omega,T,\mu,\ell) 
 & =\{\disamp \in C(\Omega,\bbR) : (\disamp,1) \in \OG_\jacobi(\ell)\}, \\
\label{eq:OGdiagdef}
\OG_\diag(\ell) 
= \OG_\diag(\Omega,T,\mu,\ell) 
& = \{ \disamp \in C(\Omega,\bbR) : (\disamp,0) \in \OG_\jacobi(\ell) \}.
\end{align}
Let us note that various quantities such as $\Sigma_{\disamp, \odsamp}$ and $\OG_\bullet(\ell)$ depend on the base dynamical system $(\Omega,T,\mu)$, but we leave it out of the notation when it is clear from context.

Now, one can ask whether the label set $\frS_0$ is minimal in two senses:
\begin{itemize}
\item Weak sense: Is $\OG_\jacobi(\ell)$ nonempty for each $\ell \in \frS_0$? If so, how large is it (in, say, the topological sense)? What about $\OG_\Schrodinger(\ell)$ and $\OG_\diag(\ell)$?

\item Strong sense: Is $\AGO_\jacobi = \AGO_\jacobi(\Omega,T,\mu) := \bigcap_{\ell \in \frS_0} \OG_\jacobi(\Omega,T,\mu,\ell)$ nonempty? If so, how large is it (in, say, the topological sense)? What about the analogous sets
\begin{align*}
 \AGO_\Schrodinger = \AGO_\Schrodinger(\Omega,T,\mu) 
 & := \bigcap_{\ell \in \frS_0} \OG_\Schrodinger(\Omega,T,\mu,\ell), \\ 
 \AGO_\diag
 = \AGO_\diag(\Omega,T,\mu)
 & := \bigcap_{\ell \in \frS_0} \OG_\diag(\Omega,T,\mu,\ell)?
 \end{align*}
\end{itemize}

We explicitly single out the strong version of the question since this is often what is pursued in the context of strictly ergodic base dynamics:
 
\begin{itemize}
\item Consider $\Omega = \bbT:= \bbR/\bbZ$, $T:\omega \mapsto \omega + \alpha$ (with $\alpha$ irrational), which has a unique invariant Borel probability measure $\mu$ given by Lebesgue measure on $\bbT$.
For this ergodic system, the set of possible labels is $\frS_0 = (\bbZ + \alpha\bbZ ) \cap (0,1)$. The Dry Ten Martini problem is then equivalent to asking whether $\disamp_\lambda \in \AGO_\Schrodinger(\bbT, \cdot+\alpha, \Leb)$ for every $\lambda \neq 0$, where 
\[\disamp_\lambda(\omega)  = 2\lambda \cos(2\pi\omega).\]
This has been studied by many people over the years \cite{AvilaJitomirskayaJEMS2010, AvilaYouZhou2024, BorgniaSlager, ChoiEllYui, LiuYuan2015, Puig2004, Riedel2012}.

\item Another recent success shows a similar result for arbitrary \emph{Sturmian} subshifts. Given $\alpha \in \bbT$ irrational and $\rho \in \bbT$
the corresponding lower and upper mechanical words are given by
\[ s_{\alpha,\rho}(n) = \chi_{[1-\alpha)} (n \alpha + \rho ),
\quad s_{\alpha,\rho}'(n) = \chi_{(1-\alpha]}  ( n\alpha + \rho ),  \]
and the Sturmian subshift with slope $\alpha$ is given by $\Omega_\alpha = \{s_{\alpha,\rho} : \rho \in \bbT\} \cup  \{s_{\alpha,\rho}' : \rho \in \bbT\} $.
Equipping $\Omega_\alpha$ with the shift transformation, $T$, it is well-known that $(\Omega_\alpha,T$) is strictly ergodic \cite{Queffelec1987}; denote by $\mu$ the unique invariant Borel probability measure. In the present terminology, Band, Beckus, and Loewy show that $\disamp_\lambda$ given by $\disamp_\lambda(\omega) = \lambda \omega_0$ lies in $\AGO_\Schrodinger(\Omega_\alpha,T,\mu)$ for every $\lambda>0$ \cite{BanBecLoe2024drytenmartinisturm}, which extends previous work of Raymond \cite{Raymond1997} (see also \cite{BBBRT2024raymond}).

\item Once we consider a general strictly ergodic system, it is reasonable to ask about typical sampling functions rather than explicit sampling functions.
In that context, Avila, Bochi, and Damanik showed that for any strictly ergodic dynamical system having a non-periodic finite-dimensional factor, $\AGO_\Schrodinger$ contains a dense $G_\delta$ subset of $C(\Omega,\bbR)$ \cite{AviBocDam2012JEMS}, with a similar result due to Damanik and Li for $\AGO_\jacobi$ \cite{DamLon2024gapOpening}. 
\end{itemize}

In the setting of a full shift over a non-connected alphabet, it is quite natural to ask these questions.
However, one can quickly observe using the presence of a dense set of periodic points that there is \emph{no} sampling function $\disamp$ such that the associated family of ergodic Schr\"odinger operators has all gaps open simultaneously. 
We thus have the following observation in the current setting.

\begin{theorem} \label{t:DTMempty}
For the full shift over a nonconnected alphabet, $\AGO_\Schrodinger = \emptyset$. 
\end{theorem}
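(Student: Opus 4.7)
Plan: The plan is to fix an arbitrary continuous sampling function $\disamp\in C(\Omega,\bbR)$ and produce a label $\ell\in\frS_0$ that is attained by the integrated density of states at an interior point of the almost-sure spectrum $\Sigma:=\Sigma_{\disamp,1}$. Because the IDS is constant on every spectral gap, such an $\ell$ cannot label an open gap, and therefore $\disamp\notin\AGO_\Schrodinger$; since $\disamp$ is arbitrary this gives the theorem. The two key ingredients are that nonconnectedness of $\calA$ makes $\frS_0$ dense in $(0,1)$, and that each letter $x\in\calA$ gives rise to a $T$-fixed point whose associated Schr\"odinger spectrum is an interval of length four contained in $\Sigma$.

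First I will establish that $\frS_0$ is dense in $(0,1)$. Nonconnectedness of $\calA$ provides a proper nonempty clopen $C\subsetneq\calA$, and $\supp\mu_0=\calA$ forces $p:=\mu_0(C)\in(0,1)$. Any such $p$ is either irrational or a rational $m/n$ with $n\ge 2$, so in either case the subring $\bbZ[p]$ is dense in $\bbR$, and Theorem~\ref{t:rand:GL:main} places $\bbZ[p]$ inside $\frS(\Omega,T,\mu)$; hence $\frS_0$ is dense in $(0,1)$.

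Next I will embed a fat interval into $\Sigma$. Pick any $x\in\calA$ and let $\omega^*:=(\dots,x,x,x,\dots)\in\Omega$; being a fixed point of $T$, the potential $n\mapsto\disamp(T^n\omega^*)$ is constantly equal to $\disamp(\omega^*)$, so $\sigma(H_{\disamp,\omega^*})=I^*:=[\disamp(\omega^*)-2,\disamp(\omega^*)+2]$. Because $\supp\mu=\Omega$, I can approximate $\omega^*$ by a sequence $\omega_n$ drawn from the full-measure set $\{\omega:\sigma(H_{\disamp,\omega})=\Sigma\}$. Continuity of $\disamp$ with respect to the product topology on $\Omega$ makes $H_{\disamp,\omega_n}\to H_{\disamp,\omega^*}$ in the strong operator topology, and a standard Weyl-sequence argument (together with the uniform norm-bound $\|H_{\disamp,\omega}\|\le\|\disamp\|_\infty+2$) yields $\sigma(H_{\disamp,\omega^*})\subseteq\liminf_n\sigma(H_{\disamp,\omega_n})=\Sigma$, whence $I^*\subseteq\Sigma$. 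This inclusion is the main technical delicacy, since $\omega\mapsto H_{\disamp,\omega}$ is only strongly (not norm) continuous in the product topology on $\Omega$, so Hausdorff continuity of the spectrum is unavailable here and one must rely on lower semicontinuity under strong convergence.

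Finally I will extract a forbidden label. Because $\Sigma$ is the topological support of the density of states measure $dk_{\disamp,1}$ and $\mathrm{int}(I^*)$ is a nonempty open subset of $\Sigma$, the IDS $k:=k_{\disamp,1}$ is continuous on $\bbR$ with $k(\inf I^*)<k(\sup I^*)$. Density of $\frS_0$ in $(0,1)$ then yields $\ell\in\frS_0\cap\bigl(k(\inf I^*),k(\sup I^*)\bigr)$, and by continuity some $E_0\in\mathrm{int}(I^*)\subseteq\mathrm{int}(\Sigma)$ satisfies $k(E_0)=\ell$. If a bounded component $G=(c,d)$ of $\bbR\setminus\Sigma$ were to carry label $\ell$, monotonicity of $k$ would force $k\equiv\ell$ on the entire closed interval joining $E_0$ to $G$; however that interval contains a neighborhood of $E_0$ lying inside $\Sigma=\supp dk_{\disamp,1}$, producing nontrivial mass and contradicting the assumed constancy. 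Therefore $\ell$ labels no open gap of $\Sigma$, completing the argument.
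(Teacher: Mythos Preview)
Your proof is correct and follows essentially the same approach as the paper: both arguments combine (i) the density of $\frS_0$ in $(0,1)$ coming from a proper clopen subset of $\calA$, and (ii) the presence of a nondegenerate interval in $\Sigma_\disamp$ coming from a periodic orbit, to conclude that some label in $\frS_0$ must be attained by the IDS in the interior of the spectrum. The paper invokes the full relation $\Sigma_\disamp=\overline{\bigcup_{\omega\text{ periodic}}\sigma(H_{\disamp,\omega})}$ as a black box, whereas you extract only what is needed---a single fixed point and the lower semicontinuity of spectra under strong convergence---and spell out the IDS argument in more detail; conceptually the two proofs are the same.
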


However, for certain choices of the measure on the underlying system, one can open all gaps for diagonal operators and all gaps for Schr\"odinger operators in the large-coupling limit:

\begin{theorem} \label{thm:gapsOpenLargeCoupling}
Let $\calA = \{0,1,2,\ldots, m-1\}$, let  $\mu_0$ be given by $\mu_0(\{j\})=1/m$ for each $j \in \calA$, and let $(\Omega_m,T,\mu)$ be the full shift generated by $(\calA,\mu_0)$. Then:
\begin{enumerate}[label = {\rm (\alph*)}]
\item For generic $\disamp \in C(\Omega_m,\bbR)$, all gaps open in the large coupling limit. 
That is, there is a dense $G_\delta$ set $\mathcal{G} \subseteq C(\Omega,\bbR)$ such that for each $\disamp \in \mathcal{G}$ and every $\ell \in \frS_0$, there exists $\lambda_0(\disamp,\ell)$ such that $\lambda  \disamp \in \OG_\Schrodinger(\ell)$ for all $\lambda \geq \lambda_0(\disamp,\ell)$.
\item   $\OG_\Schrodinger(\ell) \neq \emptyset$ for all $\ell \in \frS_0$.
\item  $\AGO_\diag \neq \emptyset$ and $\AGO_\jacobi \neq \emptyset$.
\end{enumerate}

\end{theorem}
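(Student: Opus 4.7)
My plan is to prove all three parts together by establishing a large-coupling diagonal-to-Schr\"odinger comparison and then showing $\AGO_\diag$ is a dense $G_\delta$ subset of $C(\Omega_m,\bbR)$. The key preliminary is that $H_{\lambda\disamp,\omega}$ and the diagonal operator $J_{\lambda\disamp, 0, \omega}$ differ by the free Laplacian $J_{0, 1, \omega}$, which has operator norm $2$ on $\ell^2(\bbZ)$. By the min-max principle applied to finite-box restrictions, one gets $k_{\lambda\disamp, 1}(E) \in [k_{\lambda\disamp, 0}(E - 2), k_{\lambda\disamp, 0}(E + 2)]$, equivalently between $k_{\disamp, 0}((E-2)/\lambda)$ and $k_{\disamp, 0}((E+2)/\lambda)$. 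Consequently, if $\Sigma_{\disamp,0}$ has an open gap $(a, b)$ with label $\ell$, then for every $\lambda > 4/(b - a)$ the Schr\"odinger spectrum $\Sigma_{\lambda\disamp,1}$ has an open gap labeled $\ell$ containing $(\lambda a + 2, \lambda b - 2)$. By Theorem~\ref{t:rand:GL:main}, $\frS_0 = \bbZ[1/m] \cap (0,1) = \{p/m^n : n \geq 1,\ 1 \leq p \leq m^n - 1\}$, so (a) and (b) reduce to producing diagonal sampling functions with gaps at prescribed labels.

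For (c), I would exhibit an explicit element of $\AGO_\diag$: set $\disamp^*(\omega) := \sum_{k \geq 0} \omega_k r^k$ for some fixed $r \in (0, 1/m)$. Its range $R$ satisfies the self-similarity $R = \bigsqcup_{j=0}^{m-1}(j + rR)$, where disjointness of the union is guaranteed by $r < 1/m$ (since $r(m-1)/(1-r) < 1$). Iterating shows that $R$ is a Cantor set in which the $p$th macroscopic step at depth $n$ is a genuine open gap of $R$ with IDS level $p/m^n$, for every $n \geq 1$ and $p \in \{1, \ldots, m^n - 1\}$. Thus $\disamp^* \in \AGO_\diag$, and the embedding $\disamp \mapsto (\disamp, 0)$ gives $\AGO_\diag \subseteq \AGO_\jacobi$, completing (c). Part (b) then follows by applying the comparison to $\disamp^*$: for each $\ell$, $\lambda \disamp^* \in \OG_\Schrodinger(\ell)$ whenever $\lambda$ exceeds $4$ times the reciprocal of the diagonal gap width at $\ell$.

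For (a), I set $\mathcal{G} := \AGO_\diag$ and show it is dense $G_\delta$. Each $\OG_\diag(\ell)$ is open: if $\disamp$ has a diagonal gap $(a, b)$ labeled $\ell$ and $\|\tilde\disamp - \disamp\|_\infty =: \delta < (b-a)/2$, then because $\disamp$ avoids $(a, b)$ the identity $\{\disamp \leq a\} = \{\tilde\disamp \leq a + \delta\}$ holds, so $\tilde\disamp$ retains a gap labeled $\ell$ and contained in $(a + \delta, b - \delta)$. For density, I use a two-level perturbation: given $\disamp \in C(\Omega_m, \bbR)$ and $\epsilon > 0$, first approximate within $\epsilon/2$ by $\disamp_1$ depending only on $(\omega_{-N}, \ldots, \omega_N)$ and taking $m^{2N+1}$ distinct cylinder values $v_1 < \cdots < v_{m^{2N+1}}$ with minimum separation $\eta > 0$; then set $\disamp'(\omega) := \disamp_1(\omega) + \delta\,\psi(\omega_{N+1}, \omega_{N+2}, \ldots)$ with $\psi(\omega) := \sum_{k \geq 0} \omega_k r^k$ and $\delta > 0$ small enough that $\delta(m-1)/(1-r) < \min(\eta, \epsilon)/2$. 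The range of $\disamp'$ is the disjoint union $\bigsqcup_j (v_j + \delta R)$: the macroscopic inter-piece gaps contribute all labels $j/m^{2N+1}$ (covering every $p/m^n$ with $n \leq 2N+1$), while the Cantor substructure inside each piece contributes the remaining labels of the form $((j-1)m^n + s)/m^{2N+1+n}$ with $s \in \{1, \ldots, m^n - 1\}$. A short combinatorial check confirms every element of $\frS_0$ is attained, so $\disamp' \in \AGO_\diag$ with $\|\disamp - \disamp'\|_\infty < \epsilon$. For $\disamp \in \mathcal{G}$ and $\ell \in \frS_0$ with diagonal gap width $w_\ell > 0$, taking $\lambda_0(\disamp, \ell) := 4/w_\ell$ and applying the comparison proves (a).

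The principal obstacle is the combinatorial accounting in the density step, namely verifying that the macroscopic inter-cylinder gaps together with the microscopic Cantor gaps jointly exhaust $\bbZ[1/m] \cap (0, 1)$, and the simultaneous balancing of the constants $\delta$, $\eta$, and $\epsilon$ so that $\disamp'$ has both the claimed Cantor-like range and the required proximity to $\disamp$. A minor technical point requiring care is the description of the diagonal IDS: since $\mu$ is fully supported on $\Omega_m$ and $\disamp$ is continuous, the essential range of $\disamp$ under $\mu$ equals $\disamp(\Omega_m)$, so diagonal spectral gaps correspond exactly to gaps in the range of $\disamp$.
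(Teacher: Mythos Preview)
Your proposal is correct and takes a route that differs from the paper's in one substantive way. For part (c) you and the paper do essentially the same thing: exhibit an explicit sampling function whose range is a self-similar Cantor set (the paper fixes $r=1/(2m-1)$ with a factor of $2$, you allow any $r<1/m$), and both of you use the same large-coupling comparison $H_{\lambda\disamp}=\lambda(\lambda^{-1}\Delta+V_{\disamp})$ together with eigenvalue perturbation to pass from diagonal gaps to Schr\"odinger gaps.

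The real difference is in part (a). The paper works \emph{level by level}: for each depth $r$ it builds a dense open set $U(r)$ out of balls around locally constant functions with window length $R\ge r$ taking $m^R$ distinct values, shows that any $\disamp\in U(r)$ opens all gaps $j/m^r$ at large coupling, and then intersects over $r$ via Baire. You instead prove the stronger statement that $\AGO_\diag$ itself is a dense $G_\delta$, by appending a small Cantor tail $\delta\psi(\omega_{N+1},\omega_{N+2},\ldots)$ to a locally constant approximation so that \emph{all} gaps open simultaneously for the diagonal family; the combinatorial check that the macroscopic labels $j/m^{2N+1}$ together with the microscopic labels $((j-1)m^n+p)/m^{2N+1+n}$ exhaust $\bbZ[1/m]\cap(0,1)$ is indeed routine (base-$m$ division with remainder). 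Your approach buys a cleaner unified statement---the generic set is $\AGO_\diag$, not an auxiliary intersection---at the cost of a slightly more intricate density construction; the paper's approach avoids the Cantor tail and the label-exhaustion bookkeeping by opening only finitely many gaps at each stage.
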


Once one understands that in some cases all possible labels can appear whereas in other cases some labels are forbidden, it is natural to ask what are the sets of labels that appear as labels of genuine open gaps.
We emphasize that in the setting of strictly ergodic base dynamics with a non-periodic finite-dimensional factor, the answer to this question is trivially $\frS_0$ by \cite{AviBocDam2012JEMS}: generic sampling functions have \emph{all} possible gaps open.
Thus, the non-uniquely ergodic setting offers a chance to observe genuinely new behavior.

Concretely, let us consider
\begin{align*}
\jacobilabels
= \jacobilabels(\Omega,T,\mu) 
& = \set{ \ell \in \frS_0 : \OG_\jacobi(\Omega,T,\mu,\ell) \neq \emptyset}, \\
\schrlabels
= \schrlabels(\Omega,T,\mu) 
& = \set{ \ell \in \frS_0 : \OG_\Schrodinger(\Omega,T,\mu,\ell) \neq \emptyset}, \\
\diaglabels 
= \diaglabels(\Omega,T,\mu)
& = \set{ \ell \in \frS_0 : \OG_\diag(\Omega,T,\mu,\ell) \neq \emptyset}.
\end{align*} 

\begin{theorem} \label{thm:labelSetIncl}
For any $(\Omega,T,\mu)$:
\begin{equation}
\diaglabels
\subseteq \schrlabels
\subseteq \jacobilabels
\subseteq \frS_0.
\end{equation}
\end{theorem}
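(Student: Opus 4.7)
The three inclusions split into two essentially formal steps and one substantive step. The plan is to dispose of the easy ones and then handle $\diaglabels \subseteq \schrlabels$ via a large-coupling argument.

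For $\jacobilabels \subseteq \frS_0$, I would invoke the Johnson--Schwartzman gap labelling theorem, which forces any IDS value in a genuine open gap of an ergodic Jacobi family over $(\Omega, T, \mu)$ to lie in $\frS(\Omega,T,\mu) \cap (0,1) = \frS_0$; the relevant version is the one discussed in Section~\ref{sec:background}, applied in the setting of Theorem~\ref{t:rand:GL:main}. For $\schrlabels \subseteq \jacobilabels$, the statement is immediate from the definitions \eqref{eq:OGdef}--\eqref{eq:OGschrdef}: if $\disamp \in \OG_\Schrodinger(\ell)$, then $(\disamp, 1) \in \OG_\jacobi(\ell)$, so $\ell \in \jacobilabels$.

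The remaining inclusion $\diaglabels \subseteq \schrlabels$ is the heart of the matter, and I propose to handle it by large coupling. Given $\disamp \in \OG_\diag(\ell)$, let $(a,b)$ be a bounded component of $\bbR \setminus \Sigma_{\disamp, 0}$ on which $k_{\disamp, 0} \equiv \ell$; I claim that $\lambda \disamp \in \OG_\Schrodinger(\ell)$ whenever $\lambda > 4/(b-a)$. The key point is that $H_{\lambda \disamp, \omega}$ decomposes as $\Delta + \lambda V_\omega$, where $V_\omega$ is the diagonal family sampled by $\disamp$ and $\Delta$ is the free Laplacian, of norm at most $2$ and independent of $\omega$. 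By min--max applied to finite-volume truncations (and standard thermodynamic-limit arguments), combined with the scaling $k_{\lambda \disamp, 0}(F) = k_{\disamp, 0}(F/\lambda)$, one obtains
\[
k_{\lambda \disamp, 1}(E) \in \bigl[\, k_{\disamp, 0}\bigl((E-2)/\lambda\bigr),\ k_{\disamp, 0}\bigl((E+2)/\lambda\bigr)\,\bigr].
\]
For $\lambda > 4/(b-a)$ and $E \in (\lambda a + 2, \lambda b - 2)$, both arguments lie in $(a,b)$, forcing $k_{\lambda \disamp, 1}(E) = \ell$ throughout this interval.

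Finally I would verify that this open interval is a genuine bounded gap of $\Sigma_{\lambda \disamp, 1}$ rather than an unbounded component of the resolvent set. Local constancy of the IDS on the interval places it in the resolvent set. For flanking spectrum, the IDS of $H_{\lambda \disamp, \omega}$ is continuous (owing to the nonvanishing off-diagonal), tends to $0$ as $E \to -\infty$ and to $1$ as $E \to +\infty$, and equals $\ell \in (0,1)$ on the interval, so by intermediate-value reasoning there is necessarily spectrum on both sides, identifying the interval as a bounded gap with label $\ell$. The only real technical work is upgrading the finite-volume interlacing to the displayed IDS sandwich, which is routine since $\|\Delta\|$ is bounded uniformly in $\omega$ and in the volume; I do not anticipate a serious obstacle here.
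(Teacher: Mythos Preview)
Your proposal is correct and follows essentially the same route as the paper: the two formal inclusions are handled identically, and for $\diaglabels \subseteq \schrlabels$ both you and the paper use the large-coupling/small-$\varepsilon$ idea, passing from $V_\omega$ to $\varepsilon\Delta + V_\omega$ and then rescaling, with the same threshold $\varepsilon < |\gamma|/4$. The only cosmetic difference is packaging: the paper invokes its stability lemma (Lemma~\ref{lem:preservationOfLabel}), which compares finite-volume eigenvalue counts via Hausdorff-distance bounds, whereas you write the same eigenvalue perturbation directly as an IDS sandwich via min--max.
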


We can show that in some cases, at least one of these inclusions is \emph{strict}:

\begin{theorem}\label{thm:gapsClosed}
Let $\calA = \{0,1\}$, and let  $\mu_0$ be given by $\mu_0(\{0\})=\beta$ and $\mu_0(\{1\})=1-\beta$ with $\beta$ transcendental. For the full shift generated by $(\calA,\mu_0)$ there exists $\ell \in \frS_0$ such that $\OG_\diag(\ell) = \emptyset$, that is, $\diaglabels \subsetneq \frS_0$.
\end{theorem}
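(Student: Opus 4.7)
The plan is to identify $\diaglabels$ concretely as a set of polynomial values in $\beta$ and then use the transcendence of $\beta$ to exhibit an element of $\frS_0$ outside this set.

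First, I would describe $\diaglabels$ measure-theoretically. For a diagonal family $[J_{\disamp,0,\omega}\psi](n) = \disamp(T^n\omega)\psi(n)$, a standard ergodic-theoretic computation (Birkhoff applied to the observable $\disamp$, using that $\mu$-a.e.\ $\omega$ has dense orbit in $\Omega$) gives $\Sigma_{\disamp,0} = \disamp(\Omega)$ and $k_{\disamp,0}(E) = \mu(\disamp^{-1}((-\infty,E]))$. If $(a,b)$ is a bounded component of $\bbR\setminus \disamp(\Omega)$, then $\disamp^{-1}((-\infty,a]) = \disamp^{-1}((-\infty,b))$ is clopen, and its $\mu$-measure equals the gap label. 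Conversely, given any clopen $C \subseteq \Omega$ with $\mu(C) \in (0,1)$, the continuous function $\disamp := \chi_{C^c}$ has spectrum $\{0,1\}$ and realizes $\mu(C)$ as a gap label on the gap $(0,1)$. Hence
\[
\diaglabels = \mu(\clopen(\Omega)) \cap (0,1).
\]

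Next I would extract a polynomial obstruction. Every clopen $C \subseteq \{0,1\}^\bbZ$ can, after refining to a common window $[-N,N]$, be written as a disjoint union $C = \bigsqcup_{w \in W}[w]$ with $W \subseteq \{0,1\}^{2N+1}$. Grouping these cylinders by the number of zeros in $w$ yields
\[
\mu(C) = \sum_{k=0}^{n} N_k\,\beta^k(1-\beta)^{n-k}, \qquad n := 2N+1, \quad 0 \leq N_k \leq \binom{n}{k},\ N_k \in \bbZ.
\]
Writing $Q_C(x) := \sum_k N_k\,x^k(1-x)^{n-k}$, the crucial observation is that $Q_C(0) = N_0 \in \{0,1\}$ and $Q_C(1) = N_n \in \{0,1\}$, since $\binom{n}{0} = \binom{n}{n} = 1$. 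This constraint is preserved under the standard degree-elevation identity, so it does not depend on the particular window $N$.

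Finally I would combine these with the transcendence of $\beta$. By Theorem~\ref{t:rand:GL:main}, $\frS = \bbZ[\mu_0(\clopen(\calA))] = \bbZ[\{0,\beta,1-\beta,1\}] = \bbZ[\beta]$, and since $\beta$ is transcendental the evaluation map $\bbZ[x] \to \bbZ[\beta]$ is an isomorphism, so each $\ell \in \frS$ has a unique polynomial representative $P_\ell \in \bbZ[x]$ with $\ell = P_\ell(\beta)$. If $\ell \in \diaglabels$, then $Q_C(\beta) = P_\ell(\beta)$ for some clopen $C$, and the isomorphism forces $Q_C = P_\ell$ in $\bbZ[x]$, whence $P_\ell(0), P_\ell(1) \in \{0,1\}$. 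To close the argument, take $\ell = 2\beta-1$ when $\beta > 1/2$ (with $P_\ell(x) = 2x-1$, $P_\ell(0) = -1$) or $\ell = 1-2\beta$ when $\beta < 1/2$ (with $P_\ell(x) = 1-2x$, $P_\ell(1) = -1$); transcendence rules out $\beta = 1/2$. In both cases $\ell \in \bbZ[\beta] \cap (0,1) = \frS_0$ but $\{P_\ell(0), P_\ell(1)\} \not\subseteq \{0,1\}$, so $\OG_\diag(\ell) = \emptyset$.

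The main conceptual step is recognizing the Bernstein-type constraint $N_0, N_n \in \{0,1\}$, which stems from $\binom{n}{0} = \binom{n}{n} = 1$ and survives every enlargement of the window; once this is in hand, transcendence of $\beta$ promotes a purely polynomial obstruction to a genuine obstruction for the gap label, and the witness $\ell = \pm(2\beta - 1)$ is essentially forced by wanting minimal-degree failure of $P(0), P(1) \in \{0,1\}$.
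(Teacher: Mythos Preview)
Your proof is correct and follows essentially the same approach as the paper: both identify $\diaglabels$ with $\mu(\clopen(\Omega))\cap(0,1)$, write the measure of a clopen set as a Bernstein-type polynomial $\sum_k N_k\,\beta^k(1-\beta)^{n-k}$ with $0\le N_k\le\binom{n}{k}$, and then use transcendence of $\beta$ to force polynomial equality and derive a contradiction from the constraint that the extreme coefficient lies in $\{0,1\}$. The only difference is the choice of witness---you use the degree-one label $\ell=\pm(2\beta-1)$ (checking the value at $x=0$ or $x=1$ as appropriate), whereas the paper takes $\ell=\beta^2+\beta$ after normalizing to $\beta<1/2$ and evaluates at $x=1$.
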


\begin{remark} Theorem~\ref{thm:gapsClosed} and its proof suggest some interesting questions.
\begin{enumerate}[label = {\rm (\alph*)}]
\item The approach we employ shows that for the examples we produce, there is some label allowed by the gap labelling theorem for which there cannot be any \emph{diagonal} ergodic family realizing that label as the value of the integrated density of states on a genuinely open spectral gap.
In view of Theorem~\ref{thm:labelSetIncl}, if a gap with label $\ell$ can be opened with a diagonal operator, then it can also be opened for Schr\"odinger and Jacobi operators.
However, the converse does not necessarily hold, so the conclusion of Theorem~\ref{thm:gapsClosed} does not preclude the existence of an ergodic family of \emph{Schr\"odinger}  operators having a spectral gap with the label produced by the theorem.
Is it true that $\schrlabels \subsetneq \frS_0$ in the setting of Theorem~\ref{thm:gapsClosed}?
More broadly, can one produce an example of a dynamical system such that there is a possible gap label for which there is no Schr\"odinger family exhibiting an open spectral gap with that label?
\medskip

\item 
The approach to gap labelling via the Schwartzman group (as well as the approach via $K$-theory) realizes the set of possible gap labels by intersecting a group with the interval $(0,1)$, so any integer linear combination of possible labels that happens to belong to $(0,1)$ is again a possible label.
However, it is not obvious to the authors that set of values that could in principle be assumed by the integrated density of states of an ergodic family of Schr\"odinger operators in spectral gaps needs to possess such an arithmetic structure.
Indeed, this is exactly the mechanism that is employed in the proof of the theorem to produce a label that cannot be realized by any family of diagonal operators.
\medskip

\item We also remark that questions about $\diaglabels$ in particular are most interesting in the setting of dynamics on a totally disconnected space.
For instance, if $\Omega$ is connected and $\mu$ is a fully-supported $T$-ergodic Borel probability measure on $\Omega$, it is not hard to see that $\diaglabels(\Omega,T,\mu) = \emptyset$.
\end{enumerate}

\end{remark}

The outline of the paper is as follows.
In Section~\ref{sec:background}, we recall background information about ergodic operators and the gap labelling theorem.
Section~\ref{subsec:fullshift} computes the set of labels associated with the full shift, in particular proving Theorem~\ref{t:rand:GL:main}.
In Section~\ref{sec:opengaps}, we discuss results in which gaps can be opened, and we prove Theorem~\ref{thm:gapsOpenLargeCoupling}.
Section~\ref{sec:nonopengaps} discusses obstructions to gap opening and contains the proofs of Theorems~\ref{t:DTMempty}, \ref{thm:labelSetIncl}, and \ref{thm:gapsClosed}.

\subsection*{Acknowledgements}
D.D.\ and \'{I}.E.\ were supported in part by National Science Foundation grants DMS-2054752 and DMS-2349919.
J.F.\ was supported by National Science Foundation grants DMS-2213196 and DMS-2513006. 
The authors gratefully acknowledge support from the American Institute of Mathematics through a recent SQuaRE program and thank Anton Gorodetski for helpful conversations.

\section{Background} \label{sec:background}
\subsection{Ergodic Operators}
Let us briefly recall the main objects. 
We direct the reader to \cite{ESO1, ESO2} for proofs and further discussion. 
We define a \emph{topological dynamical system} to consist of a pair $(\Omega, T)$ in which $\Omega$ is a compact metric space and $T:\Omega \to \Omega$ is a homeomorphism; in particular, we consider invertible dynamics. 
A Borel probability measure on $\Omega$ is called $T$-\emph{invariant} if $\mu(T^{-1}B) = \mu(B)$ for all Borel sets $B$ and $T$-\emph{ergodic} if it is $T$-invariant and any $T$-invariant measurable function is $\mu$-a.e.\ constant.
We adopt the  convention
\begin{equation}
\supp\mu = \Omega,
\end{equation}
which is standard in the current setting and is also non-restrictive, since the restriction of $\mu$ to $\supp\mu$ is then a fully supported ergodic measure on the topological dynamical system $(\supp\mu, T|_{\supp\mu})$.

Given a topological dynamical system $(\Omega,T)$ with an ergodic measure $\mu$, an ergodic family of \emph{Jacobi matrices} is defined by a choice of $\disamp \in C(\Omega,\bbR)$ and $\odsamp \in C(\Omega,\bbR_{\geq 0})$ and given by \eqref{eq:jacobidef}, that is,
\begin{equation} \label{eq:ejmdef}
[J_\omega \psi](n) =
\odsamp(T^{n-1}\omega)\psi(n-1) + \disamp(T^n\omega) \psi(n)+ \odsamp(T^{n}\omega)\psi(n+1), \quad \psi \in \ell^2(\bbZ).
\end{equation}
We write $H_\omega = J_{\disamp,1\omega}$ and $V_\omega = J_{\disamp,0,\omega}$ for corresponding ergodic Schr\"odinger operators and ergodic diagonal operators, respectively.
We also write $V_\omega$ for the potential $V_\omega(n) = \disamp(T^n\omega)$.

With setup as above, there is a set $\Sigma = \Sigma_{\disamp, \odsamp}$ such that
\begin{equation}
\sigma(J_{\disamp, \odsamp,\omega}) = \Sigma, \quad \mu\text{-a.e.\ } \omega \in \Omega.
\end{equation}
We call $\Sigma$ the \emph{almost-sure spectrum} of the family $\{J_\omega\}_{\omega \in \Omega}$.
The \emph{density of states measure}, $\kappa=\kappa_{\disamp, \odsamp}$, is defined by 
\begin{equation}
\int \! h  \, d\kappa
= \int_\Omega  \! \langle \delta_0, h(J_\omega) \delta_0 \rangle \, d\mu (\omega), 
\quad h \in C(\bbR).
\end{equation}
The \emph{integrated density of states}, $k = k_{\disamp, \odsamp}$ (which depends on the base dynamics $(\Omega,T,\mu)$ as well as the sampling functions  $\disamp$ and $\odsamp$), is the accumulation function of $\kappa$:
\begin{equation}
k(E) = \int  \!  \chi_{_{(-\infty,E]}} \, d\kappa.
\end{equation}

\subsection{Johnson--Schwartzman Gap Labelling} \label{ss:johnsonGL}
Let us describe the Johnson--Schwartzman approach to gap labelling; see \cite{ESO1} or \cite{DFGap} for proofs and further discussion.
Given an ergodic topological dynamical system $(\Omega,T,\mu)$, its \emph{suspension} is denoted by $(X,\tau,\nu)$. The space $X$ is given by
\begin{equation}
X  
= \Omega \times \bbR /\!\!\sim, \text{ where } (T^n\omega,t) \sim (\omega,t+n) \text{ for } \omega \in \Omega, \ t \in \bbR, \ n \in \bbZ,
\end{equation}
or sometimes (equivalently) $X$ is represented as $\Omega \times [0,1]/\!\!\sim$, where the relation is given by $(\omega,1)\sim(T\omega,0)$.
The $\bbR$-action (on the first realization of $X$) is defined by $\tau^s([\omega,t])=[\omega,s+t]$ and the suspended measure $\nu$ is given by
\begin{equation}
\int_{X} \! f \, d\nu
= \int_\Omega \int_0^1\!  f([\omega,t]) \, dt \, d\mu(\omega).
\end{equation}
Given a continuous function $\phi:X \to \bbT = \bbR/\bbZ$ and $x \in X$, one can lift the function $\phi_x: t \mapsto \phi(\tau^tx)$ to a continuous map $\widetilde{\phi}_x:\bbR \to \bbR$ satisfying $ \widetilde\phi_x \operatorname{mod}\bbZ = \phi_x $. The limit
\begin{equation}  \lim_{t\to\infty} \frac{\widetilde{\phi}_x(t)}{t} \end{equation}
exists for $\nu$-a.e.\ $x \in X$, is $\nu$-a.e.\ constant, and only depends on the homotopy class of $\phi$. Denoting by $C^\sharp(X,\bbT)$ the set of homotopy classes of maps $X \to \bbT$, the induced map $\mathfrak{A}_{\nu}:C^\sharp (X,\bbT) \to \bbR$ is called the \emph{Schwartzman homomorphism} (named after \cite{Schwarzmann1957Annals}) and its range is a countable subgroup of $\bbR$, known as the \emph{Schwartzman group}, denoted
\begin{equation} \label{eq:rand:GL:schwartzmanDef}
\frS(\Omega,T,\mu) = \mathfrak{A}_{\nu}(C^\sharp(X,\bbT)).
\end{equation}

Johnson's gap labelling theorem \cite{DFZ, Johnson1986JDE} asserts that the integrated density of states, $k$, assumes values only in the Schwartzman group for energies in the spectral gaps of the ergodic family, that is,
\begin{equation}
k_{\disamp,\odsamp}(E) \in \frS(\Omega,T,\mu), \ \forall E \in \bbR \setminus \Sigma_{\disamp,\odsamp}.
\end{equation}
Here we observe that the quantity $k_{\disamp,\odsamp}(E)$ depends on $\disamp$ and $\odsamp$, but the set $\frS(\Omega,T,\mu)$ does not.

\section{Gap Labelling for Full Shifts}\label{subsec:fullshift}
Given that the Schwartzman group is characterized as the range of the Schwartzman homomorphism on homotopy classes of maps, the first step is to find a suitable set of representatives of $C^\sharp(X,\bbT)$.  Given $g \in C(\Omega,\bbZ)$, one defines $\bar{g} \in C(X,\bbT)$ by
\begin{equation} \label{eq:rand:GL:bargDEF}
 \bar{g}([\omega,t]) = t\cdot g(\omega) \ \mathrm{mod} \ \bbZ, \quad (\omega,t) \in \Omega \times [0,1].
 \end{equation}
It turns out that this captures all continuous functions $X \to \bbT$ modulo homotopy, which in turn allows one to fully describe the Schwartzman group as the set of integrals of continuous integer-valued functions against the ergodic probability measure.

\begin{theorem} \label{thm:rand:GL:main}
Assume $\calA \subseteq \bbR$ is compact, $\mu_0$ is a probability measure with $\supp\mu_0 = \calA$, $\Omega = \calA^\bbZ$, $\mu=\mu_0^\bbZ$, and $T$ denotes the shift on $\Omega$. Let $(X,\tau, \nu)$ denote the suspension of $(\Omega,T,\mu)$.

\begin{enumerate}[label = {\rm (\alph*)}]
\item Every $h \in C(X,\bbT)$ is homotopic to $\bar{g}$ for some $g \in C(\Omega,\bbZ)$.

\item For each $g \in C(\Omega,\bbZ)$,
\begin{equation}
    \frA_{\nu}(\bar{g}) = \int_\Omega \! g \,d\mu.
\end{equation}
\item Consequently,
\begin{equation}
    \frS(\Omega,T,\mu) = \mu(C(\Omega,\bbZ)).
\end{equation}
\end{enumerate}
\end{theorem}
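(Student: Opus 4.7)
For part (b), the approach is a direct computation using the $\bbR$-action on the suspension. Given $g \in C(\Omega, \bbZ)$ and $x = [\omega, 0]$, the path $t \mapsto \barg(\tau^t x) = \barg([\omega, t])$ admits an explicit continuous lift to $\bbR$: on each interval $[n, n+1]$, the identification $[\omega, t] = [T^n\omega, t-n]$ yields $\barg([\omega,t]) = (t-n)\, g(T^n\omega) \bmod \bbZ$, so the continuous lift starting from $0$ is piecewise linear with value $\sum_{k=0}^{n-1} g(T^k\omega)$ at each integer $n$. By Birkhoff's ergodic theorem, this Birkhoff sum is asymptotic to $n \int_\Omega g\, d\mu$ for $\mu$-a.e.\ $\omega$, hence for $\nu$-a.e.\ $x$, so $\frA_{\nu}(\barg) = \int_\Omega g\, d\mu$.

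For part (a), the plan is: given $h \in C(X, \bbT)$, pull back via $\Phi(\omega, t) = [\omega, t]$ to obtain $H := h \circ \Phi : \Omega \times [0,1] \to \bbT$ satisfying $H(\omega, 1) = H(T\omega, 0)$; produce a continuous lift $\widetilde{H}: \Omega \times [0,1] \to \bbR$; and define $g(\omega) := \widetilde{H}(\omega, 1) - \widetilde{H}(T\omega, 0)$. The boundary relation forces $g(\omega) \in \bbZ$, and continuity of $\widetilde{H}$ and $T$ makes $g$ continuous. To produce a homotopy from $h$ to $\barg$, consider $F(\omega, t) := \widetilde{H}(\omega, t) - t\, g(\omega)$ on $\Omega \times [0,1]$; a short check gives $F(\omega, 1) = \widetilde{H}(T\omega, 0) = F(T\omega, 0)$, so $F$ descends to a continuous $\bbR$-valued function on $X$ whose reduction modulo $\bbZ$ coincides with $h - \barg$. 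Therefore $h - \barg$ is null-homotopic and $h \simeq \barg$.

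The hard part will be constructing the lift $\widetilde{H}$. Since path-lifting in the $t$-variable is routine once a lift on $\Omega \times \{0\}$ is fixed, the issue reduces to the topological claim that every continuous map $\phi: \Omega \to \bbT$ lifts continuously to $\bbR$. This uses the specific structure of $\Omega = \calA^\bbZ$ with $\calA \subseteq \bbR$ compact: each connected component of $\calA$ is an interval (possibly a single point), hence each connected component of $\Omega$ is a product of intervals and is therefore contractible. Combined with a compactness/approximation argument using cylinder functions (which depend on only finitely many coordinates), this yields the desired lift. Once (a) and (b) are in hand, (c) is immediate: by (a) one has $\frA_{\nu}(C^\sharp(X, \bbT)) = \frA_{\nu}(\{[\barg] : g \in C(\Omega, \bbZ)\})$, and by (b) this equals $\mu(C(\Omega, \bbZ))$.
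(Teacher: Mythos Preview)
Your proposal is correct and follows essentially the same route as the paper: in (a) you lift $h$ on $\Omega\times\{0\}$, extend by path-lifting in $t$, and set $g(\omega)=\widetilde H(\omega,1)-\widetilde H(T\omega,0)$, exactly as the paper does with its $\widetilde\varphi_t$; in (b) both arguments reduce to Birkhoff sums at integer times and invoke the ergodic theorem. The only differences are cosmetic: you write out the explicit nullhomotopy $F(\omega,t)=\widetilde H(\omega,t)-t\,g(\omega)$ where the paper simply asserts $h\simeq\barg$, and for the key step that every continuous $\Omega\to\bbT$ lifts to $\bbR$ you sketch an argument via contractible components and cylinder approximation, whereas the paper just notes $\calA\subseteq\bbR$ and cites \cite[Proposition~3.9.9]{ESO1}. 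Your sketch of that lifting step is the one place that remains genuinely incomplete---``contractible components plus compactness/approximation'' is the right intuition but would need a careful patching argument to stand on its own---so if you want a self-contained proof you should flesh that out; otherwise the structure matches the paper's.
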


\begin{proof}
(a) Let $h \in C(X,\bbT)$ be given. Since $\calA \subseteq \bbR$, the map $\varphi:\Omega \to \bbT$ sending $\omega$ to $h([\omega,0])$ is nullhomotopic. In particular by \cite[Proposition~3.9.9]{ESO1}, there exists $\widetilde\varphi \in C(\Omega,\bbR)$ such that 
\begin{equation}
\pi(\widetilde\varphi(\omega)) = h([\omega,0])\text{ for all } \omega \in \Omega,
\end{equation}
 where  $\pi:\bbR \to \bbT$ denotes the canonical quotient map.

For each $\omega \in \Omega$, the map $t \mapsto h([\omega,t])$ gives a continuous function $[0,1] \to \bbT$, so it lifts to a unique continuous function $t \mapsto \widetilde\varphi_t(\omega) \in \bbR$ such that 
\begin{equation}
\pi(\widetilde\varphi_t(\omega)) = h([\omega,t]), \text{ and }\widetilde\varphi_0(\omega) = \widetilde\varphi(\omega).
\end{equation}
 By uniform continuity of $h$, one can check that $(\omega,t)\mapsto \widetilde\varphi_t(\omega)$ is (uniformly) continuous on $\Omega \times [0,1]$. Furthermore, since 
$$
\pi(\widetilde\varphi_1(\omega))
= h([\omega,1]) 
= h([T\omega,0]) 
= \pi(\widetilde\varphi_0(T\omega)),
$$
we see that $g(\omega) := \widetilde\varphi_1(\omega) - \widetilde\varphi_0(T\omega)$ is a continuous integer-valued function. Since $h$ is homotopic to $\bar{g}$, we are done with (a). 

(b) Given $g \in C(\Omega,\bbZ)$ and $x \in X$, consider $\phi_x(t) = \bar{g}(\tau^t x)$ and the corresponding lift $\widetilde{\phi}_x$. From the definition of $\bar{g}$ and Birkhoff's ergodic theorem, one has 
\begin{equation}
 \lim_{n\to\infty} \frac{\widetilde{\phi}_{[\omega,0]}(n)}{n} 
= \lim_{n\to\infty} \frac{1}{n}\sum_{j=0}^{n-1} g(T^j\omega) 
= \int \! g\, d\mu
\end{equation}
for $\mu$-a.e.\ $\omega$. Since $\phi_{[\omega,s]}(t)  = \phi_{[\omega,0]}(s+t)$, the previous statement yields the desired conclusion.

(c) This follows from (a) and (b).
\end{proof}

\begin{definition}
We call $\Xi \subseteq \Omega$ a \emph{clopen rectangle} if there are $n \in \bbZ$, $k \in \bbN$, and clopen sets $B_j \subseteq \calA$ for $n \le j < n + k$ such that
\[ \Xi = \{\omega \in \Omega : \omega_j \in B_j \ \forall n \le j < n+k \}.\]
Defining $B_j = \calA$ for $j<n$ and $j \geq n+k$, we can also write this as
\[\Xi = \prod_{j \in \bbZ} B_j.\]
Let $\QC(\Omega)$ denote the set of all clopen rectangles in $\Omega$.
\end{definition}

\begin{prop} \label{prop:cloptoQC}
    Every nonempty $C \in \clopen(\Omega)$ can be written as a disjoint union of finitely many clopen rectangles.
\end{prop}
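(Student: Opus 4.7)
The plan is to prove the proposition in two stages: a \emph{local} stage producing, for each $\omega \in C$, a clopen rectangle neighborhood of $\omega$ contained in $C$, followed by a \emph{global} stage using compactness of $C$ and a coordinate-wise Boolean refinement to assemble finitely many such rectangles into a disjoint decomposition. The decisive structural input---and the main obstacle---is the local stage, which requires converting the topological assumption that $C$ is clopen into a coordinate-wise separation property. This conversion uses two standard facts: connected components of a product space factor as products of coordinate components, and in a compact Hausdorff space (such as $\Omega = \calA^\bbZ$) quasicomponents coincide with connected components. Together, these imply that any two points of $\Omega$ lying in distinct components can be separated by a clopen cylinder constraining a single coordinate to lie in some clopen subset of $\calA$.

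For the local stage, fix $\omega \in C$. For each $\omega' \in C^c$, the clopen set $C$ separates $\omega$ from $\omega'$, so these two points lie in distinct components of $\Omega$; the structural input above then produces an index $n(\omega') \in \bbZ$ and a clopen set $V_{\omega'} \subseteq \calA$ with $\omega_{n(\omega')} \in V_{\omega'}$ and $\omega'_{n(\omega')} \notin V_{\omega'}$. The cylinder $R_{\omega'} := \{\omega'' \in \Omega : \omega''_{n(\omega')} \in V_{\omega'}\}$ is a clopen rectangle containing $\omega$, while its complement $\{\omega'' : \omega''_{n(\omega')} \in \calA \setminus V_{\omega'}\}$ is a clopen rectangle containing $\omega'$. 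Thus $\{R_{\omega'}^c\}_{\omega' \in C^c}$ is an open cover of the compact set $C^c$, and extracting a finite subcover yields rectangles $R_{\omega'_1}, \ldots, R_{\omega'_M}$ whose intersection $R_\omega$ is itself a clopen rectangle (intersections of clopen rectangles are clopen rectangles, obtained by intersecting base sets coordinate by coordinate) and satisfies $\omega \in R_\omega \subseteq C$.

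For the global stage, $\{R_\omega\}_{\omega \in C}$ is an open cover of the compact set $C$, so a finite subcollection $R_{\omega_1}, \ldots, R_{\omega_N}$ already covers $C$; write $R_{\omega_i} = \prod_j B_{i,j}$ with $B_{i,j} = \calA$ for all $j$ outside some finite set $F \subseteq \bbZ$. For each $j \in F$, the finite family $\{B_{i,j}\}_{i=1}^N$ generates a finite Boolean algebra of clopen subsets of $\calA$, whose atoms $P_{j,1}, \ldots, P_{j,k_j}$ form a clopen partition of $\calA$. Each $B_{i,j}$ is a union of atoms, so each $R_{\omega_i}$ is a disjoint union of elementary clopen rectangles of the form $\prod_{j \in F} P_{j, \ell_j} \times \prod_{j \notin F} \calA$; consequently $C = \bigcup_i R_{\omega_i}$ equals the disjoint union of precisely those elementary rectangles appearing in at least one $R_{\omega_i}$, completing the decomposition.
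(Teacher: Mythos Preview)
Your proof is correct and takes a genuinely different route from the paper's.

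The paper first proves a two-factor lemma (Lemma~\ref{lem:rand:GL:clopProdcloprect}): every clopen set in $\calA_1 \times \calA_2$ decomposes into disjoint clopen rectangles, established via a slice argument using the sections $C_j^a$. It then shows that any clopen $C \subseteq \Omega$ depends on only finitely many coordinates (i.e., $C = \pi_N^{-1}(\pi_N(C))$ for some $N$), and applies the two-factor lemma inductively. The disjointification is handled by the explicit identity
\[
(R_1 \times R_2) \cup (S_1 \times S_2)
= [(R_1\cap S_1) \times (R_2 \cup S_2)] \sqcup [(R_1\setminus S_1) \times R_2] \sqcup [(S_1\setminus R_1) \times S_2].
\]

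Your approach bypasses the two-factor lemma entirely. The local step---producing a clopen rectangle neighborhood of $\omega$ inside $C$---is handled by the general topological facts that components of products factor coordinatewise and that quasicomponents equal components in compact Hausdorff spaces; this immediately yields a \emph{single-coordinate} clopen cylinder separating $\omega$ from any given $\omega' \in C^c$, after which compactness of $C^c$ gives the rectangle. Your disjointification via atoms of the finite Boolean algebra generated by $\{B_{i,j}\}_i$ in each coordinate is cleaner than the paper's inductive identity. The trade-off: your argument is more conceptual and works verbatim for arbitrary products of compact Hausdorff spaces, but it imports two standard topological facts rather than keeping everything self-contained as the paper does.
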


Proposition~\ref{prop:cloptoQC} will follow from a finite-dimensional version and some basic facts about the product topology. We expect this is well-known, but we were unable to find a reference in this formulation, so we give a proof to keep the paper more self-contained.

\begin{lemma} \label{lem:rand:GL:clopProdcloprect}
    Suppose $\calA_1$ and $\calA_2$ are compact metric spaces, and denote $\calB = \calA_1 \times \calA_2$. Then, every nonempty $C \in \clopen( \calB)$ can be written as a disjoint union of finitely many clopen rectangles. 
\end{lemma}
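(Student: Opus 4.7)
The plan is to exploit the slice structure of $C$. For each $x \in \calA_1$, write $C_x := \{y \in \calA_2 : (x,y) \in C\}$; since $C$ is clopen in $\calB$, each $C_x$ is clopen in $\calA_2$. The heart of the argument is to show that the assignment $x \mapsto C_x$ takes only finitely many values, each on a clopen subset of $\calA_1$, which then yields the desired rectangular decomposition of $C$.

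To formalize this, I would consider the map
\[
f : \calA_1 \to C(\calA_2, \{0,1\}), \qquad f(x)(y) = \chi_C(x,y),
\]
where the target is equipped with the sup metric. Two observations are decisive. First, the target is a \emph{discrete} metric space: any two distinct $\{0,1\}$-valued continuous functions on $\calA_2$ must differ by exactly $1$ somewhere, so the sup metric on $C(\calA_2,\{0,1\})$ only takes the values $0$ and $1$. Second, since $\calB$ is compact and metric, $\chi_C$ is uniformly continuous; choosing $\delta > 0$ so that any two points of $\calB$ within distance $\delta$ have $\chi_C$-values within $1/2$ of each other, it follows that whenever $d_{\calA_1}(x,x') < \delta$, one has $\chi_C(x,y) = \chi_C(x',y)$ for every $y \in \calA_2$. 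In other words, $f$ is locally constant (and in particular continuous).

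Then $f(\calA_1)$, being the continuous image of a compact space in a discrete space, is finite. Write $f(\calA_1) = \{\chi_{F_1}, \ldots, \chi_{F_k}\}$, where each $F_j$ is clopen in $\calA_2$, and set $E_j := f^{-1}(\chi_{F_j})$. Each $E_j$ is clopen in $\calA_1$ as the preimage of an isolated point under a continuous map, and $\{E_j\}_{j=1}^{k}$ partitions $\calA_1$. Unwinding the definitions gives
\[
C = \bigsqcup_{j=1}^{k} E_j \times F_j,
\]
and discarding any index $j$ with $F_j = \emptyset$ (for which $E_j \times F_j$ contributes nothing to $C$) yields a decomposition of $C$ into finitely many pairwise disjoint nonempty clopen rectangles.

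I do not anticipate a serious obstacle. The only conceptual point requiring care is choosing the right codomain for $f$: viewing it as the \emph{metric} space $C(\calA_2,\{0,1\})$ rather than merely the set of clopen subsets of $\calA_2$ is what lets finiteness of the image be extracted from compactness-in-a-discrete-space. Uniform continuity of $\chi_C$ then supplies local constancy for free, and bookkeeping around possibly empty slices is handled by simply dropping empty rectangles at the end.
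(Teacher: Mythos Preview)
Your argument is correct and takes a genuinely different route from the paper. The paper proceeds pointwise: for each $x\in C$ it builds, via sections in \emph{both} coordinates and a finite-intersection-property compactness argument, a clopen rectangle $R\subseteq C$ containing $x$; it then extracts a finite subcover and finally disjointifies using the identity $(R_1\times R_2)\cup(S_1\times S_2)=[(R_1\cap S_1)\times(R_2\cup S_2)]\sqcup[(R_1\setminus S_1)\times R_2]\sqcup[(S_1\setminus R_1)\times S_2]$. Your approach instead packages the slices into a single map $f:\calA_1\to C(\calA_2,\{0,1\})$, observes that the codomain is discrete under the sup metric, and uses uniform continuity of $\chi_C$ to force $f$ to be locally constant; compactness then gives finiteness of the image and clopenness of the fibers in one stroke, and the decomposition $C=\bigsqcup_j E_j\times F_j$ drops out already disjoint. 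Your route is shorter and avoids the separate disjointification step; the paper's route is more hands-on and symmetric in the two factors, which perhaps makes the geometry more transparent but at the cost of three claims and an algebraic cleanup.
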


\begin{proof}
    Let  $C  \in \clopen(\calB)$ be given, and let $\pi_{j}:\calB \to \calA_j$ denote the coordinate projections. Note that both $\pi_1$ and $\pi_2$ map closed sets to closed sets, open sets to open sets, and hence clopen sets to clopen sets.

    Let us introduce some notation. Given $a = (a_1,a_2) \in \calB$, define the associated sections of $C$ by
    \begin{align*}
        C_1^{a} & = \pi_{1}(C \cap \pi_2^{-1}(\{a_2\})) = \set{s \in \calA_1 : (s,a_2) \in C},\\
        C_2^{a} & = \pi_{2}(C \cap \pi_1^{-1}(\{a_1\})) = \set{t \in \calA_2 : (a_1,t) \in C}.
    \end{align*}

    \begin{claim}
         For each  $a \in \calB$, $C_1^{a}$ and $C_2^a$ are clopen.
    \end{claim}
    \begin{claimproof}
    Without loss of generality, consider $j=1$.
            Since $C$ is closed, $\pi_2$ is continuous, and $\pi_1$ maps closed sets to closed sets, $C_1^{a}$ is closed. 
            
            To see that $C_1^{a}$ is open, assume  $s \in C_1^{a}$. Then $(s, a_2) $ belongs to $C$, so, since $C$ is open, there is a basic neighborhood $U \times V$ of $(s,a_2)$ contained in $C$. This, in turn, implies $U \subseteq C_1^{a}$, whence $C_1^a$ is open.
    \end{claimproof}
\bigskip

    Now, let $x= (x_1,x_2) \in C$ be given. We will show that $x$ can be enclosed in a clopen rectangle contained in $C$. As a first step, we show that $x$ can be separated from any element of the complement of $C$ by a clopen rectangle.

\begin{claim} \label{claim:randgaplabel:clopSepar}
For any $y = (y_1, y_2) \in  \calB \setminus C$, there is a clopen rectangle $R=R(y)$ containing $x$ that does not contain $y$.
\end{claim}
\begin{claimproof}
Such a clopen rectangle is supplied by: 
\[R(y)=
\begin{cases}
C_1^x \times \calA_2   & \text{if } (y_1,x_2) \notin C \\
\calA_1 \times C_2^x   & \text{if } (x_1,y_2) \notin C \\
C_1^y \times C_2^y     & \text{otherwise.} 
\end{cases}\]
\end{claimproof}

\begin{claim}
    There exists a clopen rectangle $R$ containing $x$ such that $R \subseteq C$.
\end{claim}
\begin{claimproof}
    Suppose not. Then $R \setminus C$ is nonempty for every clopen rectangle $R$ containing $x$. Since the intersection of finitely many clopen rectangles is again a clopen rectangle, it follows that 
    $$\mathcal{R}(x) = \{R \setminus C : R \text{ is a clopen rectangle containing } x\}$$
    is a collection of compact subsets of $\calB$ having the finite intersection property, which implies
    \begin{equation}
    \bigcap_{R \in \mathcal{R}(x)} R \setminus C \neq \emptyset,
\end{equation}
which in turn contradicts Claim~\ref{claim:randgaplabel:clopSepar}.
\end{claimproof}
\newline

The argument thus far shows that for every $x \in C$, there is a clopen rectangle $R =  R(x)$ with $x \in R\subseteq C$. This shows that $C$ may be written as a union of clopen rectangles. By compactness, $C$ may be written as a finite union of clopen rectangles. Finally, one can use induction and the identity
\begin{equation}
    (R_1 \times R_2) \cup (S_1 \times S_2)
    = [(R_1\cap S_1) \times (R_2 \cup S_2)] 
    \sqcup [(R_1\setminus S_1) \times R_2] 
    \sqcup [(S_1\setminus R_1) \times S_2]
\end{equation}
to show that any finite union of clopen rectangles may be represented as a finite disjoint union of clopen rectangles.
\end{proof}

Let us now give the 
\begin{proof}[Proof of Proposition~\ref{prop:cloptoQC}]
    Suppose $C \in \clopen(\Omega)$ is nonempty. For each $\omega \in C$, there is a basic open set $U(\omega) = \prod_n U_n(\omega)$ containing $\omega$ and contained in $C$, where each $U_n(\omega)$ is open and  $U_n(\omega) = \calA$ for all but finitely many $n$. By compactness, we can choose $\omega^{(1)},\ldots,\omega^{(m)} \in C$ so that
    \begin{equation}
        C = \bigcup_{j=1}^m U(\omega^{(j)}).
    \end{equation}
    
    Choose $N$ large enough that $U_n(\omega^{(j)}) = \calA$ for all $|n| > N$ and all $1 \le j \le m$, let $\pi_N:\Omega \to \calA^{\{-N,\ldots,N\}}$ denote the projection onto coordinates in $[-N,N]$, and note that
    \begin{equation}
        C_N:= \pi_N(C)
    \end{equation}
    is  clopen in $\calA^{\{-N,\ldots,N\}}$.
    \begin{claim*}
    We have
    \begin{equation}
        C = \pi_N^{-1}(C_N).
    \end{equation}
    \end{claim*}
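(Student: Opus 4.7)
The plan is to prove the two set inclusions separately. The forward inclusion $C \subseteq \pi_N^{-1}(C_N)$ is immediate from the definition of $C_N$: if $\omega \in C$, then $\pi_N(\omega) \in \pi_N(C) = C_N$, so $\omega \in \pi_N^{-1}(C_N)$. The content is in the reverse inclusion $\pi_N^{-1}(C_N) \subseteq C$, which is where the choice of $N$ must be used.

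For the reverse direction, let $\omega \in \pi_N^{-1}(C_N)$. By definition of $C_N$, there exists some $\omega' \in C$ with $\pi_N(\omega) = \pi_N(\omega')$, i.e., $\omega_n = \omega'_n$ for all $|n| \le N$. Since $\omega' \in C = \bigcup_{j=1}^m U(\omega^{(j)})$, there is some index $j \in \{1, \dots, m\}$ with $\omega' \in U(\omega^{(j)})$. Now I would invoke the defining property of $N$: by construction, $U_n(\omega^{(j)}) = \calA$ for every $|n| > N$. Consequently, membership of a sequence in $U(\omega^{(j)})$ depends only on the coordinates in $[-N,N]$. Since $\omega$ and $\omega'$ agree on those coordinates, and $\omega' \in U(\omega^{(j)})$, we conclude $\omega \in U(\omega^{(j)}) \subseteq C$, as required.

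This claim, combined with the earlier work, finishes the proof of Proposition~\ref{prop:cloptoQC}: the set $C_N \subseteq \calA^{\{-N,\ldots,N\}}$ is clopen in a finite product of compact metric spaces, so by iterating Lemma~\ref{lem:rand:GL:clopProdcloprect} one writes $C_N$ as a finite disjoint union of clopen rectangles in $\calA^{\{-N,\ldots,N\}}$. Taking $\pi_N^{-1}$ of each such rectangle yields a clopen rectangle in $\Omega$ (extending by $\calA$ in the remaining coordinates), and the decomposition of $C_N$ pulls back to the desired finite disjoint decomposition of $C$.

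I do not anticipate a serious obstacle here; the only point that requires care is ensuring that $N$ was chosen uniformly across the finite subcover so that \emph{every} basic open $U(\omega^{(j)})$ is genuinely a cylinder over $[-N,N]$. This is exactly what was arranged in the paragraph preceding the claim, so the argument goes through.
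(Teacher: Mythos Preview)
Your proof of the claim is correct and is essentially identical to the paper's: both establish $C \subseteq \pi_N^{-1}(C_N)$ trivially and then, for the reverse inclusion, pick $\omega' \in C$ with the same $[-N,N]$-coordinates as $\omega$, locate $\omega'$ in some $U(\omega^{(j)})$, and use $U_n(\omega^{(j)}) = \calA$ for $|n|>N$ to conclude $\omega \in U(\omega^{(j)}) \subseteq C$. Your additional paragraph explaining how the claim feeds into the end of Proposition~\ref{prop:cloptoQC} also matches the paper's treatment.
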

    \begin{claimproof}
        Since $C_N = \pi_N(C)$, we immediately get 
        $$\pi_N^{-1}(C_N) = \pi_N^{-1}(\pi_N(C)) \supseteq C.$$
        For the other inclusion, assume $\omega \in \pi_N^{-1}(C_N)$. By definition, there is some $\omega' \in C$ such that $\omega_n = \omega_n'$ for all $|n| \leq N$. Choose $1 \le j \le m$ so that $\omega' \in U(\omega^{(j)})$. Since $\omega_n = \omega_n'$ for $|n| \leq N$ and $U_n(\omega^{(j)}) = \calA$ for $|n| > N$, it follows that $\omega \in U(\omega^{(j)}) \subseteq C$, as desired.
    \end{claimproof}

    Now, given $\omega \in C$, applying Lemma~\ref{lem:rand:GL:clopProdcloprect} (and induction), we get a clopen rectangle 
    $$
    R_{-N} \times \cdots \times R_N \subseteq C_N$$ containing $\pi_N(\omega)$. Extending $R_j = \calA$ for all $|j| > N$ gives a clopen rectangle in $\Omega$ containing $\omega$ and contained in $C$. As in the proof of Proposition~\ref{prop:cloptoQC}, this expresses $C$ as a union of clopen rectangles, which can be first reduced to a finite union via compactness and then decomposed into a disjoint union by algebraic considerations.
\end{proof}

\begin{prop} \label{prop:algebras}
$\bbZ[\mu[\QC(\Omega)]] = \bbZ[\mu_0[\clopen(\calA)]]$.
\end{prop}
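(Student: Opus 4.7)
The plan is to establish both inclusions separately, with each being a direct consequence of the product structure of $\mu = \mu_0^\bbZ$ together with the definition of a clopen rectangle.

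First, I would verify $\bbZ[\mu[\QC(\Omega)]] \subseteq \bbZ[\mu_0[\clopen(\calA)]]$. Given any clopen rectangle $\Xi = \prod_{j \in \bbZ} B_j$ with $B_j \in \clopen(\calA)$ and $B_j = \calA$ for all but finitely many $j$, the product measure formula yields
\[
\mu(\Xi) = \prod_{j \in \bbZ} \mu_0(B_j),
\]
which is in fact a finite product since $\mu_0(\calA) = 1$. Hence $\mu(\Xi)$ is a product of elements of $\mu_0[\clopen(\calA)]$, and since an algebra is closed under (finite) products, this product lies in $\bbZ[\mu_0[\clopen(\calA)]]$. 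Taking the algebra generated by $\mu[\QC(\Omega)]$ preserves this containment.

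For the reverse inclusion, I would note that for every $B \in \clopen(\calA)$, the cylinder
\[
\Xi_B := \{\omega \in \Omega : \omega_0 \in B\} = \cdots \times \calA \times B \times \calA \times \cdots
\]
is itself a clopen rectangle in $\Omega$, with $\mu(\Xi_B) = \mu_0(B)$. Therefore $\mu_0[\clopen(\calA)] \subseteq \mu[\QC(\Omega)]$, and passing to the generated algebras yields $\bbZ[\mu_0[\clopen(\calA)]] \subseteq \bbZ[\mu[\QC(\Omega)]]$.

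This proof is essentially bookkeeping; there is no substantive obstacle, since the product measure structure makes both directions transparent. The only mild point to emphasize is that the formula $\mu(\Xi) = \prod_j \mu_0(B_j)$ has only finitely many nontrivial factors, which is exactly what allows the value to lie in the finitely generated algebra.
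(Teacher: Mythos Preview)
Your proposal is correct and follows essentially the same argument as the paper: both directions use the product formula $\mu(\Xi)=\prod_j \mu_0(B_j)$ for clopen rectangles and the observation that the single-coordinate cylinder $\{\omega:\omega_0\in B\}$ recovers $\mu_0(B)$. Your write-up is slightly more explicit about why the product is finite (via $\mu_0(\calA)=1$), but otherwise the two proofs coincide.
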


\begin{proof}
    If $\Xi \in \QC(\Omega)$, we can write 
    $$
    \Xi = \prod_{j \in \bbZ} B_j,
    $$
    where each $B_j \subseteq \calA$ is clopen and all but finitely many are $\calA$. From this, one sees $\mu(\Xi) \in \bbZ[\mu_0[\clopen(\calA)]]$. Thus,
    \begin{equation}
        \bbZ[\mu[\QC(\Omega)]] \subseteq \bbZ[\mu_0[\clopen(\calA)]].
    \end{equation}
    On the other hand, for any $B \in \clopen(\calA)$, the set $\Xi  = \set{\omega \in \Omega : \omega_0 \in B }$ belongs to $\QC(\Omega)$ and satisfies $\mu(\Xi) = \mu_0(B)$. Therefore, $\mu_0[\clopen(\calA)] \subseteq \mu[\QC(\Omega)]$ and the corresponding inclusion holds also for the algebras they generate, which completes the proof.
\end{proof}

\begin{proof}[Proof of Theorem~\ref{t:rand:GL:main}]
    Since the characteristic function of any $\Xi \in \QC(\Omega)$ is a continuous integer-valued function, we have $\mu(\QC(\Omega)) \subseteq \mu(C(\Omega,\bbZ))$ and hence
    \[\bbZ[\mu(\QC(\Omega))] 
    \subseteq  \mu(C(\Omega,\bbZ)).\]
    On the other hand, any $f \in C(\Omega,\bbZ)$ can be written as
    \[ f = \sum_{m \in \mathrm{Ran}(f)} m \chi_{C_m},\]
    where $C_m = f^{-1}[\{m\}]$; note that $C_m$ is clopen for every $m$ and is empty for all but finitely many $m$. 
For each $m$ with $C_m \neq \emptyset$, we can write it as a disjoint union of clopen rectangles by Proposition~\ref{prop:cloptoQC}. 
This shows that $\int \! f \, d\mu$ belongs to $\bbZ[\mu(\QC(\Omega))]$, and thus
\begin{equation}\label{eq:ZmuCRmuCOmegaZ}
\bbZ[\mu(\QC(\Omega))] 
    =  \mu(C(\Omega,\bbZ)).
\end{equation}
Combining \eqref{eq:ZmuCRmuCOmegaZ} with Theorem~\ref{thm:rand:GL:main} and Proposition~\ref{prop:algebras}, one has
    \begin{equation}
    \frS(\Omega,T,\mu) 
    = \mu(C(\Omega,\bbZ)) 
    = \bbZ[\mu(\QC(\Omega))]
    = \bbZ[\mu_0[\clopen(\calA)]],
    \end{equation}
    as promised.
\end{proof}

\begin{remark} \label{rem:cylindertoclopen}
When $\calA$ is finite, a \emph{cylinder set} is a set of the form
\begin{equation}
\Xi_{u,I} = \set{\omega \in \calA^\bbZ : \omega|_I = u}
\end{equation}
where $u$ is a finite word, and $I = [a,b] \cap  \bbZ$ denotes an interval in $\bbZ$.
Then, every cylinder set is a clopen rectangle; indeed, writing $u = u_a\cdots u_b$, $R_n = \{u_n\}$ for $a \le n \le b$ and $R_n = \calA$ otherwise, one has
\[\Xi_{w,I} = \prod_{n \in \bbZ} R_n.\]
Moreover, every clopen rectangle is a union of disjoint cylinder sets. Concretely, if $R$ is a clopen rectangle, we may write
\[ R = \prod_{n \in \bbZ} R_n, \]
where every $R_n$ is clopen and choose $N$ for which $R_n = \calA$ for $|n| > N$. Denoting $P = \prod_{|n|\leq N}R_n$, we get
\[ R = \bigsqcup_{w \in P} \Xi_{w,[-N,N]}. \]
In particular,
\[ \{ \text{measures of clopen sets}\} = \{ \text{measures of finite disjoint unions of cylinder sets} \}. \]

\begin{theorem}\label{t:DTMclopen}
Suppose $(\Omega, T,\mu)$ is a full shift over a compact alphabet $\calA \subseteq \bbR$. 
Then
\begin{equation}
\diaglabels= \mu(\clopen(\Omega)) \cap (0,1).
\end{equation}
Equivalently, for $\ell \in \frS_0$,  $\OG_\diag(\ell) \not= \emptyset $ if and only if there is a clopen set $\Xi \subseteq \Omega$ with $\mu(\Xi) = \ell$.
\end{theorem}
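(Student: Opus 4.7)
The plan is to argue both inclusions directly using the very explicit form taken by the spectrum and the integrated density of states for a diagonal operator family.

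For the inclusion $\diaglabels \subseteq \mu(\clopen(\Omega)) \cap (0,1)$, fix $\disamp \in C(\Omega,\bbR)$. Since $V_\omega \delta_0 = \disamp(\omega)\delta_0$, the density of states measure is the pushforward $\disamp_*\mu$, so
\[
k_{\disamp,0}(E) = \mu\bigl(\disamp^{-1}((-\infty,E])\bigr),
\]
and by minimality of $T$ on $\supp\mu = \Omega$ one has $\Sigma_{\disamp,0} = \disamp(\Omega)$. Suppose $(a,b)$ is a bounded component of $\bbR \setminus \Sigma_{\disamp,0}$, so that $\disamp(\Omega) \cap (a,b) = \emptyset$. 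Then for $E \in (a,b)$,
\[
\disamp^{-1}((-\infty,E]) = \disamp^{-1}((-\infty,a]) = \disamp^{-1}((-\infty,b)),
\]
which exhibits this preimage as both closed and open. Call this set $\Xi$. Then $\Xi \in \clopen(\Omega)$ and the value of the gap label is $k_{\disamp,0}(E) = \mu(\Xi)$. Since the label lies in $(0,1)$, this gives $\diaglabels \subseteq \mu(\clopen(\Omega)) \cap (0,1)$.

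For the reverse inclusion, suppose $\Xi \in \clopen(\Omega)$ satisfies $\ell := \mu(\Xi) \in (0,1)$. Then $\Xi$ and $\Omega \setminus \Xi$ are both nonempty clopen sets, so the indicator function
\[
\disamp := \chi_{\Omega \setminus \Xi}
\]
is continuous, takes only the values $0$ and $1$, and attains each of them. Consequently $\Sigma_{\disamp,0} = \disamp(\Omega) = \{0,1\}$, which has the unique bounded gap $(0,1)$. For any $E \in (0,1)$,
\[
k_{\disamp,0}(E) = \mu\bigl(\disamp^{-1}((-\infty,E])\bigr) = \mu(\Xi) = \ell,
\]
so $\disamp \in \OG_\diag(\ell)$ and therefore $\ell \in \diaglabels$.

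Combining the two inclusions proves the identity, and the equivalent reformulation in terms of $\OG_\diag(\ell)$ is immediate. There is no real obstacle here: the entire content of the theorem is the observation that for purely diagonal operators the spectrum is literally the range of $\disamp$ and the IDS is the distribution function of $\disamp_*\mu$, so that the presence of a spectral gap is precisely the same thing as a clopen level set of $\disamp$ of the appropriate weight.
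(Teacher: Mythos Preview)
Your approach is essentially the same as the paper's: both directions use the same constructions (the indicator $\chi_{\Omega\setminus\Xi}$ in one direction, the sublevel set $\disamp^{-1}((-\infty,a])$ in the other), and your explicit identification of the density of states measure as $\disamp_*\mu$ makes the argument pleasantly transparent.

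There is, however, one genuine slip. You justify $\Sigma_{\disamp,0}=\disamp(\Omega)$ by appealing to ``minimality of $T$ on $\supp\mu=\Omega$,'' but the full shift is \emph{not} minimal whenever $\#\calA\ge 2$: the constant sequences are fixed points whose orbits are not dense. The correct justification is that $\mu$ is ergodic with full support, so $\mu$-a.e.\ $\omega$ has a dense orbit in $\Omega$; for any such $\omega$ one has
\[
\sigma(V_{\disamp,\omega})=\overline{\{\disamp(T^n\omega):n\in\bbZ\}}=\disamp(\Omega),
\]
since $\disamp$ is continuous and $\Omega$ is compact. Equivalently, you may simply observe that $\kappa=\disamp_*\mu$ has support $\disamp(\supp\mu)=\disamp(\Omega)$, which already gives $\disamp(\Omega)\subseteq\Sigma_{\disamp,0}$ and is all you actually need for the first inclusion. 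With that correction the proof is complete and matches the paper's.
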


We will need the following stability result.

\begin{lemma} \label{lem:preservationOfLabel}
Suppose $(\Omega,T,\mu)$ is an ergodic topological dynamical system with $\supp \mu = \Omega$ and $(\disamp,\odsamp) \in C(\Omega,\bbR) \times C(\Omega,\bbR_{\geq 0})$ is such that the family $\{J_{\disamp,\odsamp,\omega}\}$ has an open spectral gap $I$ with label $\ell$.
If
\begin{equation} \label{eq:gapStabilityBd}
\|\disamp - \disamp'\|_\infty + 2 \|\odsamp - \odsamp'\|_\infty
< |I|/2,
\end{equation}
then $\{J_{\disamp',\odsamp',\omega}\}$ also has an open spectral gap with label $\ell$.
\end{lemma}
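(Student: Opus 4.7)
The plan is to combine a direct operator-norm perturbation bound with a continuity argument for the integrated density of states along the straight-line interpolation between the two pairs of sampling functions.

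First, I would note the uniform operator-norm bound
\begin{equation*}
\|J_{\disamp,\odsamp,\omega} - J_{\disamp',\odsamp',\omega}\|
\leq \|\disamp-\disamp'\|_\infty + 2\|\odsamp-\odsamp'\|_\infty =: r,
\end{equation*}
which follows from the tridiagonal structure and the triangle inequality. By hypothesis $r < |I|/2$, so Weyl-type stability of self-adjoint spectra yields $\Sigma_{\disamp',\odsamp'} \subseteq \Sigma_{\disamp,\odsamp} + [-r,r]$. Writing $I = (a,b)$, the open interval $(a+r,b-r)$ is nonempty and disjoint from $\Sigma_{\disamp',\odsamp'}$, and hence lies in a spectral gap $I'$ of the perturbed family.

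Next, I would fix $E \in (a+r,b-r)$, which lies simultaneously in gaps of both families, and interpolate linearly by $(\disamp_s,\odsamp_s) := (1-s)(\disamp,\odsamp) + s(\disamp',\odsamp')$ for $s \in [0,1]$. The same bound shows that $E$ remains in a spectral gap of $\{J_{\disamp_s,\odsamp_s,\omega}\}$ for every $s$, so $k_s(E) := k_{\disamp_s,\odsamp_s}(E)$ is a genuine gap label of the $s$-interpolant at $E$. Choosing a continuous $h \colon \bbR \to [0,1]$ with $h \equiv 1$ on $(-\infty, a+r]$ and $h \equiv 0$ on $[b-r,\infty)$, the support condition on $\kappa_s$ gives $k_s(E) = \int h\, d\kappa_s$. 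Since $s \mapsto J_{\disamp_s,\odsamp_s,\omega}$ is norm-continuous uniformly in $\omega$, the continuous functional calculus together with dominated convergence yields continuity of
\begin{equation*}
s \longmapsto \int h\, d\kappa_s = \int_\Omega \langle \delta_0, h(J_{\disamp_s,\odsamp_s,\omega})\delta_0\rangle\, d\mu(\omega).
\end{equation*}
Hence $s \mapsto k_s(E)$ is continuous on $[0,1]$.

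Finally, the gap labelling theorem gives $k_s(E) \in \frS(\Omega,T,\mu)$ for every $s$, and $\frS$ is a countable subgroup of $\bbR$. A continuous map from the connected interval $[0,1]$ into a countable subset of $\bbR$ must be constant, so $k_1(E) = k_0(E) = \ell$, identifying $\ell$ as the label of the gap of $\{J_{\disamp',\odsamp',\omega}\}$ containing $E$. The only nonroutine step is this last constancy argument, which hinges on countability of the Schwartzman group; the remaining ingredients are routine consequences of operator-norm perturbation and continuous functional calculus.
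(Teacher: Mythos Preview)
Your proof is correct and takes a different route from the paper's. The paper argues directly via finite-volume eigenvalue counting: taking $E_\star$ to be the midpoint of $I$ and choosing $\omega$ in the full-measure set on which the IDS of both families is realized as a limit of normalized traces of truncations, it uses the Hausdorff-distance bound \eqref{eq:specdistST} to place $E_\star$ in a gap of the perturbed family and then matches the truncation eigenvalue counts below $E_\star$ via Hermitian eigenvalue perturbation, so that the two limits agree. You instead run a homotopy: interpolate linearly, use the continuous functional calculus and dominated convergence to obtain continuity of $s \mapsto k_s(E)$, and then invoke Johnson's gap labelling theorem together with countability of $\frS$ to force constancy along the path. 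The paper's approach is more self-contained in that it does not feed the gap labelling theorem back into a lemma subsequently used to analyze gap labels, whereas yours is conceptually cleaner and makes transparent that the label is a homotopy invariant, at the price of importing Johnson's theorem and the countability of the Schwartzman group as black boxes. There is no logical circularity in doing so, since the gap labelling theorem is cited externally and the lemma is stated for a general ergodic system.
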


The proof of the lemma uses a well-known perturbative fact about the spectrum. For compact $A,B \subseteq \bbR$, one defines the \emph{Hausdorff distance} between them by
\begin{align}
d_\Hausdorff(A,B)
& = \max\set{\sup_{a \in A}d(a,B), \sup_{b \in B} d(b,A)} \\
& = \inf\set{\varepsilon>0 : A \subseteq U_\varepsilon(B) \text{ and }B \subseteq U_\varepsilon(A)},
\end{align}
where $U_\varepsilon(S) = \{x \in \bbR : |x-y| \leq \varepsilon \text{ for some } y \in S\}$.
For any bounded self-adjoint operators $S,T$, one has 
\begin{equation} \label{eq:specdistST}
d_\Hausdorff(\sigma(S),\sigma(T)) \leq \|S - T \|.
\end{equation}
See \cite[Theorem~V.4.10]{Kato1980:PertTh} or \cite[Lemma~5.2.4]{ESO2} for a proof.

\begin{proof}[Proof of Lemma~\ref{lem:preservationOfLabel}]
Let $E_\star$ denote the midpoint of $I$, and choose $\omega$ in the full-measure set for which
\begin{align}
\label{eq:kEstarJ}
k_{\disamp,\odsamp}(E_\star)
& = \lim_{N \to \infty} \frac{1}{N} \tr \, \chi_{(-\infty,E_\star]}(J_{\disamp, \odsamp,\omega}\chi_{[0,N-1]}), \\
\label{eq:kEstarJ'}
k_{\disamp',\odsamp'}(E_\star)
& = \lim_{N \to \infty} \frac{1}{N} \tr \, \chi_{(-\infty,E_\star]}(J_{\disamp', \odsamp',\omega}\chi_{[0,N-1]}).
\end{align}
Assumption \eqref{eq:gapStabilityBd} together with \eqref{eq:specdistST} implies that there is an open interval $I'$ containing $E_\star$ in a spectral gap of $\{J_{\disamp',\odsamp',\omega}\}$ and moreover that 
\[ \tr \, \chi_{(-\infty,E_\star]}(J_{\disamp, \odsamp,\omega}\chi_{[0,N-1]})
=  \tr \, \chi_{(-\infty,E_\star]}(J_{\disamp', \odsamp',\omega}\chi_{[0,N-1]})\]
 for every $N \in \bbN$.
Together with \eqref{eq:kEstarJ} and \eqref{eq:kEstarJ'}, we see that 
\[k_{\disamp', \odsamp',\omega}(E) = k_{\disamp, \odsamp, \omega}(E)
= \ell\quad \text{ for all } E \in I',\]
as promised.
\end{proof}

\begin{proof}[Proof of Theorem~\ref{t:DTMclopen}]
Assume $\Xi \subseteq \Omega$ is clopen and has measure $0<\ell<1$ (i.e., $\Xi \neq \emptyset,\Omega$).
Take $\disamp = 1-\chi_{\Xi} = \chi_{\Omega \setminus \Xi}$, and consider the diagonal family $\{V_{\disamp,\omega}\}$. 
 Considering a finite cutoff $V_{\disamp,\omega,N}$, we see that $V_{\disamp,\omega,N}$ has eigenvalues $0$ and $1$, whose normalized multiplicities converge (almost surely) to $\ell$ and $1-\ell$, respectively.
\medskip

Conversely, assume $\ell \in \diaglabels$, and choose $\disamp \in C(\Omega,\bbR)$ such that $\{V_{\disamp,\omega}\}_{\omega \in \Omega}$ has an open spectral gap $(E_\ell^-,E_\ell^+)$ with label $\ell$.
Then, $\Xi = \{\omega \in \Omega: \disamp (\omega) \leq E_\ell^-\}$ is clopen and one has $k(E) = \mu(\Xi)$ for $E \in (E_\ell^-,E_\ell^+)$.
\end{proof}

\end{remark}

\section{Opening Gaps} \label{sec:opengaps}

Fix $m \in \bbN$, let $\calA_m = \{0,1,2,\ldots,m-1\}$, and let $\mu_0$ be a fully supported probability measure on $\calA_m$. Let us denote the full shift generated by $(\calA_m,\mu_0)$ by $(\Omega_m,T,\mu)$.
Given an $\omega \in \Omega_m$, we denote its $T$-orbit by
\begin{equation}
    \orb(\omega) = \set{T^n \omega : n \in \bbZ} \subseteq \Omega_m
\end{equation}
and say that $\omega$ is $T$-\emph{generic} if $\orb(\omega)$ is dense in $\Omega_m$. For a given $\disamp \in C(\Omega,\bbR)$ let $\Sigma_\disamp$ denote the $\mu$-almost sure spectrum of the family of ergodic Schr\"odinger operators $\{ H_{\disamp,\omega} \}$.
Using strong operator convergence, we have $\Sigma_\disamp = \sigma(H_{\disamp,\omega})$ for any $T$-generic $\omega \in \Omega_m$.

On account of \eqref{eq:specdistST}, the map sending $\disamp \in C(\Omega_m,\bbR)$ to $\Sigma_\disamp \subseteq \bbR$ is Lipschitz continuous with respect to the uniform metric on the domain and the Hausdorff metric on the codomain, that is,
\begin{equation} \label{eq:specdistfg}
d_{\Hausdorff}(\Sigma_{\disamp_1},\Sigma_{\disamp_2}) \leq \|\disamp_1 - \disamp_2\|_\infty, \quad \disamp_1,\disamp_2 \in C(\Omega_m,\bbR).
\end{equation}
Denote the discrete Laplacian on $\ell^2(\bbZ)$ by $\Delta$, that is, $[\Delta\psi](n) = \psi(n-1) + \psi(n+1)$.
Since $H_{\disamp,\omega} =\Delta+V_{\disamp,\omega}$, $\|\Delta\|=2$, and $\sigma(V_{\disamp,\omega}) = \range(\disamp)$ for $T$-generic $\omega$, one has $d_\Hausdorff(\Sigma_\disamp,\range(\disamp)) \leq 2$, which gives
\begin{equation} \label{eq:SigmafHDcons1}
\Sigma_\disamp \subseteq
U_2(\range(\disamp)) =  \bigcup_{\omega \in \Omega}[\disamp(\omega) - 2, \disamp(\omega)+2]
\end{equation}
and 
\begin{equation} \label{eq:SigmafHDcons2}
\Sigma_\disamp \cap[\disamp(\omega) - 2, \disamp(\omega)+2] \neq \emptyset \ \text{ for every } \omega.
\end{equation}

\begin{theorem} \label{t:openGapsForFullShift}
For Baire-generic $\disamp \in C(\Omega_m,\bbR)$, and every $N\in \bbN$, there exists $\lambda_0 = \lambda_0(\disamp,N)$ such that $\Sigma_{\lambda \disamp}$ has at least $N$ connected components for every $\lambda >\lambda_0$.

Indeed, the generic subset contains any $\disamp$ for which $\range(\disamp)$ has infinitely many connected components.
\end{theorem}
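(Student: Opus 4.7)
The plan decouples the theorem into two nearly independent parts: a deterministic spectral count showing that whenever $\range(\disamp)$ has at least $N$ components the scaled spectrum $\Sigma_{\lambda\disamp}$ inherits that many components at large coupling, and a Baire category argument showing that the set of $\disamp$ with $\range(\disamp)$ having infinitely many components is a dense $G_\delta$.

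For the deterministic step, assume $\range(\disamp)$ has at least $N$ components. Being a compact subset of $\bbR$, it admits separators $t_1 < \cdots < t_{N-1}$ in its complement that split it into $N$ nonempty compact pieces $R_k = \range(\disamp) \cap [t_{k-1},t_k]$, with the convention $t_0 = -\infty$ and $t_N = +\infty$. Set $\delta = \min_{1 \leq k < N}\bigl(\inf R_{k+1} - \sup R_k\bigr) > 0$. For $\lambda > 4/\delta$, the sets $\lambda R_k + [-2,2]$ are pairwise disjoint; the inclusion \eqref{eq:SigmafHDcons1} then yields
$$\Sigma_{\lambda\disamp} \subseteq \bigsqcup_{k=1}^{N}\bigl(\lambda R_k + [-2,2]\bigr),$$
and applying \eqref{eq:SigmafHDcons2} to any $\omega_k$ with $\disamp(\omega_k) \in R_k$ shows $\Sigma_{\lambda\disamp}$ meets each piece. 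Hence $\Sigma_{\lambda\disamp}$ decomposes into at least $N$ disjoint nonempty clopen subsets and has at least $N$ components. This immediately proves the second assertion: any $\disamp$ whose range has infinitely many components admits such a $\lambda_0(\disamp,N)$ for every $N$.

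For the genericity step, let $U_n = \{\disamp \in C(\Omega_m,\bbR) : \range(\disamp)\text{ has at least }n\text{ components}\}$. To see $U_n$ is open, given $\disamp \in U_n$ pick separators $t_i$ as above and $\eta > 0$ small enough that every piece $R_k$ sits inside $(t_{k-1}+\eta, t_k-\eta)$; then for $\|\disamp-\disamp'\|_\infty < \eta$, any $\omega_k$ with $\disamp(\omega_k) \in R_k$ satisfies $\disamp'(\omega_k) \in (t_{k-1},t_k)$, while no $t_i$ lies in $\range(\disamp')$, so $\disamp' \in U_n$. For density, exploit that $\Omega_m$ is a Cantor set: given $\disamp$ and $\epsilon > 0$, uniform continuity supplies a clopen partition $V_1,\ldots,V_K$ of $\Omega_m$ on which $\disamp$ oscillates by less than $\epsilon/2$, and refining if necessary one arranges $K \geq n$. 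Pick $K$ pairwise distinct constants $c_j$, each within $\epsilon/2$ of the local values of $\disamp$ on $V_j$, and set $\disamp' = \sum_j c_j \chi_{V_j}$; then $\|\disamp-\disamp'\|_\infty < \epsilon$ and $\range(\disamp')$ is a finite set of $K \geq n$ points, so $\disamp' \in U_n$. By Baire, $\bigcap_n U_n$ is a dense $G_\delta$, and by the deterministic step every element satisfies the conclusion.

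The main point requiring care is the openness argument for $U_n$: one must place the separators $t_i$ in sufficiently wide complementary gaps of $\range(\disamp)$ so that small $C(\Omega_m,\bbR)$-perturbations both continue to miss each $t_i$ and continue to supply witness points in every strip $(t_{k-1},t_k)$. Everything else flows quickly from \eqref{eq:SigmafHDcons1}, \eqref{eq:SigmafHDcons2}, the scaling behavior of $\range(\lambda\disamp)$, and the fact that every clopen subset of the Cantor set $\Omega_m$ can be refined into arbitrarily many smaller clopen subsets.
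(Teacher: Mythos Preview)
Your proof is correct and follows essentially the same approach as the paper: both define the sets $U_n$ (the paper calls them $Q_N$) of sampling functions whose range has at least $n$ components, use \eqref{eq:SigmafHDcons1}--\eqref{eq:SigmafHDcons2} together with scaling to obtain the deterministic component count at large coupling, and conclude via Baire. The paper simply asserts that each $Q_N$ is open and dense, whereas you spell out these verifications in detail; otherwise the arguments are the same.
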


\begin{proof}
For each $N$, let  $Q_N \subseteq C(\Omega_m,\bbR)$ consist of those $\disamp$ for which $\range(\disamp)$ has at least $N$ connected components. Equivalently, $\disamp \in Q_N$ if and only if one can find pairwise disjoint closed intervals $I_1,\ldots, I_N$ such that 
\begin{equation} \label{eq:rangefInXiN}
\range(\disamp) \subseteq \bigcup_{j=1}^N I_j, \quad I_j \cap \range(\disamp) \neq \emptyset \text{ for every } j.
\end{equation}
Note that \eqref{eq:rangefInXiN} remains true if $\disamp$ is replaced by $\lambda \disamp$ and $I_j$ by $\lambda I_j$.

Combining \eqref{eq:rangefInXiN} with \eqref{eq:SigmafHDcons1} and \eqref{eq:SigmafHDcons2}, one obtains
\begin{equation}
\Sigma_{\lambda \disamp}\subseteq \bigcup_\omega [\lambda \disamp(\omega)-2,\lambda \disamp(\omega)+2]
\subseteq \bigcup_{j=1}^N U_2(\lambda I_j)
\end{equation}
and
\begin{equation}
\Sigma_{\lambda \disamp} \cap U_2(\lambda I_j) \neq \emptyset \text{ for each } j.
\end{equation}
Choosing $\lambda$ large enough, the $U_2(\lambda I_j)$ are pairwise disjoint, so $\Sigma_{\lambda \disamp}$ has at least $N$ connected components.

Since each $Q_N$ is open and dense, their intersection is a dense $G_\delta$ set with the desired property.
\end{proof}

\begin{remark}
In fact, the only feature of $\Omega_m$ that is used in the previous proof is its total disconnectedness, so the conclusion of Theorem~\ref{t:openGapsForFullShift} holds with $(\Omega_m,T,\mu)$ replaced by any topologically transitive dynamical system with totally disconnected phase space and fully-supported ergodic measure.
\end{remark}

Using the construction from the previous proof, we can establish  Theorem~\ref{thm:gapsOpenLargeCoupling}.
In the course of the proof, we will need the following notions: a function $\disamp:\Omega_m \to \bbR$ is \emph{locally constant} with \emph{window} $I \subseteq \bbZ$ if $I$ is finite and
\begin{equation}
\omega|_I = \omega'|_I \implies \disamp(\omega)= \disamp(\omega').
\end{equation}
Without loss of generality, one can always choose $I$ to be an interval.
Observe that any locally constant $\disamp$ is continuous on $\Omega_m$ and the set of locally constant functions is dense in $C(\Omega_m,\bbR)$, for instance, by the Stone--Weierstrass theorem.

\begin{proof}[Proof of Theorem~\ref{thm:gapsOpenLargeCoupling}]
To begin, apply Theorem~\ref{t:rand:GL:main} to see that the Schwartzman group associated with $(\Omega_m, T, \mu)$ is precisely 
$\bbZ[1/m] = \{j/m^r : \ r \in \bbZ_+, \ j \in \bbZ \},$
so in this scenario, 
\begin{equation}
\frS_0 = \{j/m^r : r \in \bbZ_+, 1 \le j \le m^r -1 \} .
\end{equation}

(a) For each $r \in \bbZ_+$, let $S(r) \subseteq C(\Omega_m, \bbR)$ denote the set of $\disamp$ for which all gaps of the form $j/m^r$ open for sufficiently large $\lambda$, that is, $\disamp \in S(r)$ if and only if there exists $\lambda_0 = \lambda_0(\disamp,r)$ such that $\lambda \disamp \in \OG_\Schrodinger(j/m^r)$ for all $\lambda \geq \lambda_0(\disamp,r)$ and every $1\le j \le m^r-1$.
By the Baire category theorem, it suffices to show that $S(r)$ contains a dense open subset of $C(\Omega_m,\bbR)$ for each $r\in \bbZ_+$.

Let $S'(r)$ denote the set of locally constant $\disamp \in C(\Omega_m,\bbR)$ with window of length $r$ taking $m^{r}$ distinct values.
In other words, for some $n \in \bbZ$ one has
\[ \disamp(\omega) =  \disamp(\omega') \iff \omega_{n+j}  = \omega'_{n+j} \ \text{ for all } 0 \le j \le r -1 . \]
For each $\disamp \in S'(r)$, let $\delta(\disamp) = \tfrac14 \min\{ |\disamp(\omega) - \disamp(\omega') | : \disamp(\omega) \neq \disamp(\omega') \}$.
We claim that
\begin{equation}
U(r):=
\bigcup_{R=r}^\infty \bigcup_{\disamp \in S'(R)} B(\disamp,\delta(\disamp))
\end{equation}
is open, dense, and contained in $S(r)$.
Indeed, $U(r)$ is a union of open sets, hence open.
To see that $U(r)$ is dense, notice that any $\disamp \in C(\Omega_m)$ can be perturbed to a locally constant function with window $I$ having length at least $r$,
 and then perturbed again to a locally constant function with window $I$ taking $m^{\#I}$ distinct values.

To see that $U(r) \subseteq S(r)$, let $\disamp \in U(r)$ be given and choose $R \geq r$ and $\widetilde \disamp \in S'(R)$ such that $\|\disamp - \widetilde \disamp\|_\infty < \delta:=\delta(\widetilde \disamp)$.
Without loss, assume $\widetilde \disamp$ has window $I = [0,R-1]\cap\bbZ$, write the distinct values of $\widetilde \disamp$ as $v_1< \cdots < v_{m^R}$, and note that
\[ 
\delta
= \frac{1}{4} \min\{ v_{i+1} - v_i : 1\le i \le m^R-1 \}. \]
For $\mu$-a.e.\ $\omega \in \Omega_m$, the operator $V_{\widetilde \disamp,\omega}$ has spectrum $\{v_j : 1 \le j \le m^R\}$ and hence has $m^R-1$ open spectral gaps.
It follows from the definitions that
\[ k(E) = \frac{j}{m^R}, \quad E \in (v_j,v_{j+1}), \quad 1 \le j \leq m^R-1, \]
where $k$ denotes the integrated density of states associated with the operator family $\{V_{\widetilde \disamp,\omega}\}_{\omega \in \Omega_m}$.

Now consider the Jacobi operator $A_{\disamp,\omega,\varepsilon}= \varepsilon \Delta + V_{\disamp,\omega}$.
Choosing $0<\varepsilon < \frac{1}{4} \delta$, we may deduce the following conclusions from eigenvalue perturbation theory of Hermitian matrices: the eigenvalues of $A_{\disamp,\omega,\varepsilon,N}$ all lie in the disjoint union
\[ K =\bigsqcup_{j=1}^{m^R} [v_j-2\varepsilon-\delta, v_j + 2\varepsilon + \delta]
\subseteq \bigsqcup_{j=1}^{m^R} [v_j-\tfrac32 \delta, v_j + \tfrac32 \delta],\]
and furthermore, the number of eigenvalues of $A_{\disamp,\omega,\varepsilon,N}$ in $[v_j - \tfrac32 \delta, v_j + \tfrac32  \delta]$ is the same as the multiplicity of the eigenvalue $v_j$ of the matrix $V_{\widetilde \disamp,\omega,N}$.

Synthesizing the observations in the previous paragraph, we see that $(v_j+ \tfrac32 \delta, v_{j+1}- \tfrac32 \delta)$ is (contained in) an open spectral gap of $A_{\disamp,\omega,\varepsilon}$ and that the IDS of the family $\{A_{ \disamp,\omega,\varepsilon}\}_{\omega \in \Omega_m}$ takes value $j/m^R$ in that spectral gap. 
Thus,  $H_{\disamp,\omega,\varepsilon} = \varepsilon^{-1} A_{\disamp,\omega,\varepsilon}= \Delta + \varepsilon^{-1}V_{\disamp,\omega}$ has an open spectral gap containing the rescaled interval $\varepsilon^{-1} \cdot( v_j+\tfrac32\delta ,  v_{j+1} - \tfrac32 \delta  )$ with the same label $j/m^R$.
In particular, $\lambda \disamp \in \OG_\Schrodinger(j/m^r)$ for all $1 \le j \le m^r-1$ and all $\lambda > 4\delta^{-1}$.
\bigskip

(b) This follows directly from (a).
\bigskip

(c)
Define $\disamp \in C(\Omega_m,\bbR)$ by
\begin{equation}
\disamp(\omega) = \sum_{n=1}^\infty \frac{2\omega_n}{(2m-1)^n}.
\end{equation}
The range of $\disamp$ is the Cantor set of real numbers in $[0,1]$ having at least one $(2m-1)$-ary expansion with no odd digits, and thus for $\mu$-a.e.\ $\omega \in \Omega_m$, $\sigma(V_{\disamp,\omega}) = C_{m}$  (recall that $V_{\disamp,\omega}$ denotes the diagonal operator given by multiplication with $\disamp(T^n\omega)$).
Equivalently, $C_m$ can be obtained by an iterated function procedure via
\begin{equation}
C_m = \bigcap_{n \geq 0} F_n,
\end{equation} 
where $F_0 = [0,1]$ and 
\[F_{n+1} = \bigcup_{j=0}^{m-1} f_j(F_n),
\quad f_j(x) = \frac{2j+x}{2m-1}. \]
By induction, $F_n$ has $m^n$ connected components, so we can define a function $g_n: \bbR \setminus F_n \to [0,1]$ by
\[ g_n(x) = \frac{\# \{ \text{connected components of } F_n \text{ that lie to the left of }x \}}{m^n}.\] 
For each $n \leq n'$, and $x \notin F_n$, $g_n(x) = g_{n'}(x)$, so we can define $g_\infty: \bbR \setminus C_m \to [0,1]$ by $g_\infty(x) = \lim g_n(x)$.
Since $g_\infty$ is uniformly continuous, we can uniquely extend to $g_\infty: \bbR \to [0,1]$.

By construction, the integrated density of states associated with the family $\{V_{\disamp,\omega}\}_{\omega \in \Omega_m}$ is $g_\infty$, so every gap label of the form $j/m^r$ with $1\leq j \leq m^r-1$ and $r \in \bbZ_+$ corresponds to an open gap. 
This shows that $\disamp \in \AGO_\diag$ and $(\disamp,0) \in \AGO_\jacobi$, so both sets are nonempty.
\end{proof}

By Lemma~\ref{lem:preservationOfLabel}, one can see that $\OG_\Schrodinger(\ell)$ is always \emph{open} in $C(\Omega_m, \bbR)$.
Often, one tries to furthermore show that $\OG_\Schrodinger(\ell)$ is dense for each $\ell$. However, this cannot be the case in the current setting: if $\OG_\Schrodinger(\ell)$ were dense for every $\ell \in \frS_0$, then
\[\AGO_\Schrodinger = \bigcap_{\ell \in \frS_0} \OG_\Schrodinger(\ell)\]
would be a dense $G_\delta$ subset of $C(\Omega_m,\bbR)$ by the Baire category theorem, contradicting Theorem~\ref{t:DTMempty}. 

In fact, one can show a stronger statement with a little more work: denseness fails uniformly across \emph{all} labels, that is,  $C(\Omega_m,\bbR) \setminus \OG_\Schrodinger(\ell)$ has nonempty interior for \emph{every} $\ell \in \frS_0$.

\begin{prop} \label{prop:fullShiftDTMnotdense}
Let $(\Omega_m,T,\mu)$ be as above. For any $\ell \in \frS_0$, $\OG_\Schrodinger(\ell)$ is not dense in $C(\Omega_m,\bbR)$. Indeed, the complement of $\OG_\Schrodinger(\ell)$ contains an $\ell$-dependent open ball centered at the zero function. \end{prop}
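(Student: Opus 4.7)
The plan is to exploit the constant sequences $a^\infty \in \Omega_m$ for $a \in \calA_m$: they will force $\Sigma_\disamp$ to contain a definite interval around the unique energy at which the free IDS takes the value $\ell$, leaving no room for a gap there. Let $k_0(E) = \tfrac{1}{\pi}\arccos(-E/2)$ denote the IDS of the free Laplacian $\Delta$, a strictly increasing bijection $[-2,2]\to[0,1]$, put $E_0 := k_0^{-1}(\ell) \in (-2,2)$, and take
\begin{equation*}
r := \tfrac{1}{3}\min(E_0 + 2, \, 2 - E_0) > 0,
\end{equation*}
so that $[E_0 - r,\, E_0 + r] \subset (-2 + r,\, 2 - r)$.

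The first substantive step is to show that every $\disamp \in C(\Omega_m, \bbR)$ with $\|\disamp\|_\infty < r$ satisfies $[-2 + r,\, 2 - r] \subseteq \Sigma_\disamp$. Fix any $a \in \calA_m$; since $a^\infty$ is a fixed point of $T$, one has $H_{\disamp, a^\infty} = \Delta + c_a I$ where $c_a := \disamp(a^\infty)$, so $\sigma(H_{\disamp, a^\infty}) = [c_a - 2,\, c_a + 2] \supseteq [-2 + r,\, 2 - r]$ because $|c_a| < r$. Since $T$-generic points are dense in $\Omega_m$, one may select $T$-generic $\omega^{(n)} \to a^\infty$; continuity of $\disamp$ gives pointwise convergence of the potentials and hence strong convergence $H_{\disamp, \omega^{(n)}} \to H_{\disamp, a^\infty}$, which in turn yields $\sigma(H_{\disamp, a^\infty}) \subseteq \liminf_n \sigma(H_{\disamp, \omega^{(n)}}) = \Sigma_\disamp$.

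Now suppose for contradiction that $\Sigma_\disamp$ has an open gap $I$ with label $\ell$. The operator inequality $\Delta - r \le H_{\disamp, \omega} \le \Delta + r$, combined with the min-max principle applied to finite-volume truncations, yields the IDS comparison $k_0(E - r) \le k_\disamp(E) \le k_0(E + r)$ for every $E \in \bbR$. Since $k_\disamp \equiv \ell$ on $I$ and $k_0$ is strictly increasing on $(-2, 2)$, this forces $I \subseteq [E_0 - r,\, E_0 + r]$. But by the previous step and the choice of $r$, $[E_0 - r,\, E_0 + r] \subseteq [-2 + r,\, 2 - r] \subseteq \Sigma_\disamp$, contradicting $I \cap \Sigma_\disamp = \emptyset$. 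Hence the open ball of radius $r$ around the zero function in $C(\Omega_m, \bbR)$ is contained in the complement of $\OG_\Schrodinger(\ell)$. The main obstacle is the spectral inclusion in step two, which has to be argued via approximation by generic points since $a^\infty$ is not itself generic; the IDS comparison and the final contradiction are essentially routine applications of min-max.
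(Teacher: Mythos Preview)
Your proof is correct and follows essentially the same approach as the paper's: both use a constant sequence (you use $a^\infty$, the paper uses $0^\bbZ$) together with the inclusion of periodic spectra in $\Sigma_\disamp$ to place an explicit interval inside the spectrum, and both use the IDS comparison $k_0(E-r)\le k_\disamp(E)\le k_0(E+r)$ (you via min--max, the paper via the Avron--Simon lemma) to pin down where a gap with label $\ell$ would have to sit. The only cosmetic difference is in the endgame: the paper produces a point $E'$ in the interior of a band with $k_\disamp(E')=\ell$, whereas you trap the entire hypothetical gap $I$ inside the band $[-2+r,2-r]$; these are dual formulations of the same contradiction.
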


\begin{proof}
Let $\ell \in \frS_0$ be given, and choose $E \in (-2,2) = \operatorname{int}\sigma(\Delta)$ such that $k(E) = \ell$, where $k = k_0$ denotes the IDS associated to the free Laplacian. 
Consider $\disamp \in C(\Omega_m,\bbR)$ satisfying $\|\disamp\|_\infty < \varepsilon=\frac{1}{2}\mathrm{dist}(E,\bbR \setminus [-2,2])$. Observe that $a = \disamp(0^\bbZ) \in (-\varepsilon, \varepsilon)$, so we have 
\[ E \in (-2+a,2+a) \subseteq \Sigma_\disamp,\]
where the second inclusion is a consequence of \cite[Theorem 5.2.10]{ESO2}.
Moreover, by \cite[Lemma~3.1]{AvronSimon1983DMJ}
\[ k_\disamp(E-\|\disamp\|_\infty) \leq k_0(E) \leq k_\disamp(E + \|\disamp\|_\infty),
\]
so there exists some 
\[
E' \in [E-\|\disamp\|_\infty, E+ \|\disamp\|_\infty] 
\subseteq (E-\varepsilon, E+\varepsilon)
\subseteq (-2+a,2+a)\]
at which $k_\disamp(E')= k_0(E)= \ell$, and this suffices to demonstrate that the gap with label $\ell$ is closed.
\end{proof}

\section{Gaps That Cannot Open} \label{sec:nonopengaps}
At this point, we have addressed the set of labels as well as questions related to the ability to open suitable spectral gaps. 
In particular, we now know that for specific measures, every gap can in principle be opened by a suitable choice of operator family, or more concisely: $\OG_\Schrodinger(\ell) \neq \emptyset$ for every $\ell \in \frS_0$.
The purpose of this section is to investigate \emph{obstructions} to gap opening. One obstruction is purely dynamical: as stated before, having a dense set of periodic orbits (together with the standing assumption $\supp \mu = \Omega$) means that the spectrum must have dense interior and hence one cannot open all gaps with a single sampling function whenever the label set is dense in $[0,1]$.
The other obstruction is arithmetic: if the weights on a Bernoulli full shift are \emph{transcendental}, then certain gaps are arithmetically excluded for diagonal operators.

We begin by proving Theorem~\ref{thm:labelSetIncl}, which relates the sets of \emph{possible} gap labels for diagonal, Schr\"odinger, and Jacobi operator families.
\begin{proof}[Proof of Theorem~\ref{thm:labelSetIncl}]
Let $\ell \in \diaglabels$ be given. Given $\disamp \in \OG_\diag(\ell)$, the operator $V_\omega = V_{\disamp,\omega}$ has an open gap $\gamma = (E_-,E_+)$ with label $\ell$. 
For $0 < \varepsilon < |\gamma|/4$, the operator $\varepsilon \Delta + V_\omega$ has an open gap $\gamma' \supseteq (E_-+\varepsilon, E_+  - \varepsilon)$ with the same label, $\ell$ (by Lemma~\ref{lem:preservationOfLabel}). 
Consequently, $\Delta + \varepsilon^{-1}V_\omega = \varepsilon^{-1}(\varepsilon \Delta + V_\omega)$ also has a gap with the same label, so $\varepsilon^{-1}\disamp \in \OG_\Schrodinger(\ell)$. 
In particular, $\ell \in \schrlabels$.

Since every Schr\"odinger operator is also a Jacobi matrix, the second inclusion follows. 
The third inclusion is simply a restatement of the gap labelling theorem from \cite{DFZ}, so we are done.
\end{proof}

Next, we show that $\AGO_\Schrodinger$ is always empty in the setting of a full shift over a non-connected alphabet, that is, there is no single sampling function that can simultaneously open all spectral gaps. 

\begin{proof}[Proof of Theorem~\ref{t:DTMempty}]
This follows immediately from the following well-known relation, which can be shown with the help of strong operator approximation:
\begin{equation}
\Sigma_\disamp
= \overline{\bigcup_{\omega \text{ periodic}} \sigma(H_{\disamp,\omega})}.
\end{equation}
See \cite[Theorem 5.2.10]{ESO2} for a proof.
Thus, $\Sigma_\disamp$ has dense interior and hence is not a Cantor set. Since the label set associated with $(\Omega,T,\mu)$ is a dense subgroup of $\bbR$, it follows that $\Sigma_\disamp$ has infinitely many labels corresponding to closed gaps.
\end{proof}

In fact, the inability to open gaps appears to be somewhat more severe than that: there are choices of ergodic measures on the full shift for which certain labels seem to not correspond to open gaps at all.
The question is in general delicate: we here show a partial result that certain gaps cannot open for ergodic families of \emph{diagonal operators}.
In view of Theorem~\ref{thm:labelSetIncl}, this is \emph{not} dispositive of the inability to open gaps with a specific label in the \emph{Schr\"odinger} case.

Let us be more specific. As established in Theorem~\ref{thm:gapsOpenLargeCoupling} for certain choices of $T$-ergodic measures on the full shift, it is possible to find, given any possible gap label $\ell$, a sampling function $f$ such that the corresponding operator exhibits an open gap with that label. However, the full shift is not uniquely ergodic, and the selection of a $T$-ergodic measure is crucial.
Consider the setting where we have a two-letter alphabet $\calA=\{0,1\}$ and the measure $\mu_0(\{0\})=\beta$ and $\mu_0(\{1\})=1-\beta$ where $\beta$ is transcendental. 
We will presently show that there are possible labels $\ell \in \frS_0$ for which $\OG_\diag(\ell) = \emptyset$, that is, there is no $\disamp$ for which a gap with label $\ell$ opens for the operator family $\{V_{\disamp,\omega}\}$.

\begin{proof}[Proof of Theorem~\ref{thm:gapsClosed}]
Without loss of generality, assume $0<\beta<1/2$ (if not, simply interchange the roles of $0$ and $1$).
Due to Theorem~\ref{t:rand:GL:main}, 
\[\frS(\Omega,T,\mu) =
\bbZ[\beta,1-\beta]=
 \bbZ[\beta],
\] so,
in view of Theorem~\ref{t:DTMclopen}, it suffices to exhibit $\ell \in \bbZ[\beta]\cap (0,1) \setminus \mu[\clopen(\Omega)] $.

Let $Y$ be a clopen subset of $\Omega$. 
Applying Proposition~\ref{prop:cloptoQC}, we can write $Y$ as a disjoint union of clopen rectangles, each of which can in turn be written as a disjoint union of finitely many cylinder sets (see Remark~\ref{rem:cylindertoclopen}).
Thus, we can choose  $M \in \bbN$ sufficiently large and express $Y$ as the disjoint union of a finite collection $\mathcal{F}$ of sets of the form
\begin{equation} 
 S
 =\prod_{i=-\infty}^{-M} \mathcal{A} \times \prod_{i=-M+1}^{M-1} \{\alpha_i\} \times \prod_{i=M}^{\infty} \mathcal{A}, 
 \end{equation}
where each $\alpha_i$ is either $0$ or $1$.

Note that for each $S\in \mathcal{F}$, we have
\[
\mu(S)
=\prod_{i=-M+1}^{M-1}\mu_0(\alpha_i)=\beta^n(1-\beta)^{2M-1-n},
\] 
where $n$ denotes the number of $i$ for which $\alpha_i=0$. Therefore
\[\mu(Y)=\sum_{n=0}^{2M-1}c_n \beta^n (1-\beta)^{2M-1-n},\]
where $c_n$ is the number of $S\in \mathcal{F}$ that have precisely $n$ zeros as coordinates and thus
\begin{equation} \label{eq:cnconstraint}
c_n\in \bbZ \cap \left[0,\binom{2M-1}{n} \right].
\end{equation} 

Now let $\ell = \beta^2+ \beta$, which belongs to $\bbZ[\beta] \cap (0,1) = \frS_0$ by the assumption $0<\beta<1/2$. 
We will show that there does not exist a clopen set $Y$ such that $\mu(Y)=\beta+\beta^2.$

We define the following polynomials
\[
p_1(x) = \sum_{n=0}^{2M-1}c_nx^n(1-x)^{2M-1-n}, \quad p_2(x) = x+x^2.
\]
Assuming $\mu(Y)=\ell$, it follows that
\[
p_1(\beta) = p_2(\beta).
\]
However, since $\beta$ is transcendental, the equality of these polynomials would imply that $p_1(1)=p_2(1)$, which
implies $c_{2M-1}=2$. 
Since \eqref{eq:cnconstraint} forces $c_{2M-1}\in\{0,1\}$, this is a contradiction.
Therefore, the polynomials $p_1$ and $p_2$ are not equivalent, and equality cannot hold for transcendental $\beta$. 
Consequently,  $\OG_\diag(\ell)=\emptyset$.
\end{proof}

\begin{remark}
Any $\ell$ that cannot be expressed in the form $\sum_{i=0}^{n} c_i \beta^i (1-\beta)^{n-i}$, where $c_i$ is an integer between $0$ and $\binom{n}{i}$, satisfies $\OG_\diag(\ell) = \emptyset$. However, the straightforward method used to show that $\ell=\beta^2 + \beta$ cannot be represented in this form does not necessarily apply to other choices of $\ell$.
\end{remark}

\bibliographystyle{abbrv}

\bibliography{ref}

\end{document}